\theoremstyle{plain}
\newtheorem{theorem}{Theorem}[section]
\newtheorem{lemma}[theorem]{Lemma}
\newtheorem{proposition}[theorem]{Proposition}
\theoremstyle{definition}
\newtheorem{definition}[theorem]{Definition}
\newtheorem{example}{\sc Example}
\theoremstyle{remark}
\newtheorem{remark}{\sc Remark}
\theoremstyle{case}
\def\pi{positive implicative }
\date{}
\begin{document}

\title{\bf {Abnormal and Contranormal $L$-Subgroups of an $L$-group}}
\author{\textbf{Ananya Manas$^1$ and Iffat Jahan$^2$} \\\\ 
	$^{1}$Department of Mathematics, \\
	University of Delhi, Delhi, India \\
	ananyamanas@gmail.com \\  
	$^{2}$ Department of Mathematics, Ramjas College\\
	University of Delhi, Delhi, India \\
	ij.umar@yahoo.com \\\\
	  }
\date{}
\maketitle

\begin{abstract}
	\noindent In this paper, the concepts of abnormal and contranormal $L$-subgroups of an $L$-group have been introduced using the notion of the conjugate developed in \cite{jahan_conj}. Then, the properties of abnormal and contranormal $L$-subgroups have been studied analogous to their group-theoretic counterparts. Thereafter, several relations of abnormality and contranormality have been investigated in context of normality, maximality, normalizer and normal closure of $L$-subgroups of an $L$-group. \\\\
	{\bf Keywords:} $L$-algebra; $L$-subgroup; Generated $L$-subgroup; Normal $L$-subgroup; Maximal $L$-subgroup; Abnormal $L$-subgroup; Contranormal $L$-subgroup.
\end{abstract}

\section{Introduction}

\noindent In 1971, Rosenfeld \cite{rosenfeld_fuzzy} applied the notion of fuzzy sets, introduced by Zadeh \cite{zadeh_fuzzy}, to subgroups and subgroupoids, initiating the studies of fuzzy algebraic structures. Thereafter, Liu \cite{liu_op}, in 1981, extended this notion to lattices and introduced the notion of lattice valued fuzzy subgroups. Subsequently, a number of researchers investigated fuzzy algebraic structures and generalized various concepts to the fuzzy setting. We mention here that in majority of these studies, the parent structure was considered to be a classical group rather than an $L$(fuzzy)-group. This setting has a significant limitation that it does not allow for the formulation of various concepts in the $L$(fuzzy) theory. This drawback can be easily removed by taking the parent structure to be a $L$(fuzzy)-group rather than a classical group. Indeed, in \cite{ajmal_gen, ajmal_nc, ajmal_nil, ajmal_nor, ajmal_sol}, Ajmal and Jahan have introduced and studied various algebraic structures in $L$-setting specifically keeping in view their compatibility. Additionally, the authors of this paper have continued this research in \cite{jahan_max, jahan_app} by introducing the maximal and Frattini $L$-subgroups of an $L$-group and exploring their relationship with various structures in $L$-group theory. This paper is a continuation of similar studies. 

In classical group theory, both abnormal subgroups and contranormal subgroups are considered to be ``opposite" to the notion of normal subgroups \cite{carter_nilpotent, fattahi_groups, kurdachenko_abnormal}. This is because they satisfy two noteworthy properties that are in contrast to the properties of normal subgroups - firstly, the only subgroup of a group that is both normal and abnormal (contranormal) is the whole group, and secondly, maximal subgroups are either normal or abnormal (contranormal) subgroups. In the case of $L$(fuzzy)-group theory, no study has been undertaken to generalize the notion of abnormal or contranormal $L$-subgroups, as well as study their interactions with the well established notions of normal and maximal $L$-subgroups. This is because the notion of conjugate fuzzy subgroup, introduced by Mukherjee and Bhattacharya \cite{mukherjee_some}, was not adequate to define these concepts. However, in \cite{jahan_conj}, the authors have provided a new definition of conjugate $L$-subgroups, wherein the conjugate is taken with respect to an $L$-point rather than a crisp point of the parent group $G$. Moreover, this notion of the conjugate has been shown to be compatible with the notions of normality and normalizer of $L$-subgroups. In addition, this notion of the conjugate can be readily applied to define the abnormal and contranormal $L$-subngroups of an $L$-group. 

We begin our work in section 3 by defining the abnormal $L$-subgroup by an $L$-point using the notion of the conjugate developed in \cite{jahan_conj}. A level subset characterization for abnormal $L$-subgroups has been developed. Then, the image of abnormal $L$-subgroups under group homomorphism has been discussed. An example has been added to demonstrate the notion of abnormal $L$-subgroups. Thereafter, the notion of abnormal $L$-subgroups has been studied with respect to normality of $L$-subgroups. It has been shown that the only $L$-subgroup of an $L$-group $\mu$ that is both a normal and an abnormal $L$-subgroup of $\mu$ is $\mu$ itself. Then, it has been shown that an abnormal $L$-subgroup of $\mu$ is self-normalizing in $\mu$. Finally, the abnormality of maximal $L$-subgroups has been discussed.

In section 4, we define the contranormal $L$-subgroup of an $L$-group. An equivalent definition of contranormal $L$-subgroups is provided using the normal closure of $L$-subgroups of an $L$-group. Thereafter, a level subset characterization has been provided for contranormal $L$-subgroups. Then, various properties of contranormal $L$-subgroups have been investigated in context of normality and maximality of $L$-subgroups. Finally, we prove that an abnormal $L$-subgroup of $\mu$ is a contranormal $L$-subgroup of $\mu$. We finish the section by demonstrating this result with an example.     

\section{Preliminaries}

Throughout this paper, $L = \langle L, \leq, \vee, \wedge  \rangle$ denotes a completely distributive lattice where '$\leq$' denotes the partial ordering on $L$ and '$\vee$'and '$\wedge$' denote, respectively, the join (supremum) and meet (infimum) of the elements of $L$. Moreover, the maximal and minimal elements of $L$ will be denoted by $1$ and $0$, respectively. The concept of completely distributive lattice can be found in any standard text on the subject \cite{gratzer_lattices}. 

The notion of a fuzzy subset of a set was introduced by Zadeh \cite{zadeh_fuzzy} in 1965. In 1967, Goguen \cite{goguen_sets} extended this concept to $L$-fuzzy sets. In this section, we recall the basic definitions and results associated with $L$-subsets that shall be used throughout the rest of this work. These definitions can be found in chapter 1 of \cite{mordeson_comm}.

Let $X$ be a non-empty set. An $L$-subset of $X$ is a function from $X$  into $L$. The set of  $L$-subsets of $X$ is called  the $L$-power set of $X$ and is denoted by $L^X$.  For  $\mu \in L^X, $  the set $ \lbrace\mu(x) \mid x \in X \rbrace$  is called the image of $\mu$  and is denoted by  $\text{Im}(\mu)$. The tip and tail of $ \mu $  are defined as $\bigvee \limits_{x \in X}\mu(x)$ and $\bigwedge \limits_{x \in X}\mu(x)$, respectively. An $L$-subset $\mu$ of $X$ is said to be contained in an $L$-subset $\eta$  of $X$ if  $\mu(x)\leq \eta (x)$ for all $x \in X$. This is denoted by $\mu \subseteq \eta $.  For a family $\lbrace\mu_{i} \mid i \in I \rbrace$  of $L$-subsets in  $X$, where $I$  is a non-empty index set, the union $\bigcup\limits_{i \in I} \mu_{i} $    and the intersection  $\bigcap\limits_{i \in I} \mu_{i} $ of  $\lbrace\mu_{i} \mid i \in I \rbrace$ are, respectively, defined by
\begin{center}
	$\bigcup\limits_{i \in I} \mu_{i}(x)= \bigvee\limits_{i \in I} \mu(x)  $ \quad and \quad $\bigcap\limits_{i \in I} \mu_{i} (x)= \bigwedge\limits_{i \in I} \mu(x) $
\end{center}
for each  $x \in X $. If  $\mu \in L^X $  and  $a \in L $,  then the level  subset $\mu_{a}$ of $\mu$ is defined as
\[	\mu_{a}= \lbrace x \in X \mid \mu (x) \geq a\rbrace. \]
For $\mu, \nu \in L^{X} $, it can be verified easily that if $\mu\subseteq \nu$, then $\mu_{a} \subseteq \nu_{a} $ for each $a\in L $.

For $a\in L$ and $x \in X$, we define $a_{x} \in L^{X} $ as follows: for all $y \in X$,
\[
a_{x} ( y ) =
\begin{cases}
	a &\text{if} \ y = x,\\
	0 &\text{if} \ y\ne x.
\end{cases}
\]
$a_{x} $ is referred to as an $L$-point or $L$-singleton. We say that $a_{x} $ is an $L$-point of $\mu$ if and only if
$\mu( x )\ge a$ and we write $a_{x} \in \mu$. 

Let $S$ be a groupoid. The set product $\mu \circ \eta$   of $\mu, \eta \in L^S$ is an $L$-subset of $S$ defined by
\begin{center}
	$\mu \circ \eta (x) = \bigvee \limits_{x=yz}\lbrace\mu (y) \wedge \eta (z) \rbrace.$
\end{center}

\noindent Note that if $x$ cannot be factored as  $x=yz$  in $S$, then  $\mu \circ \eta (x)$, being  the least upper bound of the empty set, is zero. It can be verified that the set product is associative in  $L^S$  if $S$ is a semigroup.

Let $f$ be a mapping from a set $X$ to a set $Y$. If $\mu \in L ^{X}$ and $\nu \in L^{Y}$, then the image $f(\mu )$
of $\mu $ under $f$ and the preimage $f^{-1} (\nu )$ of $\nu $ under $f$ are $L$-subsets of $Y$ and $X$ respectively, defined by
\[
f(\mu )(y)=\bigvee\limits_{x\in f^{-1} (y)} \{\mu (x)\} \qquad\text{ and } \qquad f^{-1} (\nu )(x)=\nu (f(x)).
\]
Again,  if $f^{-1} (y)=\phi $,
then $f(\mu )(y)$ being the least upper bound of the empty set, is zero.

\begin{proposition}(\cite{mordeson_comm}, Theorem 1.1.14)
	\label{hom}
	Let $f : X \rightarrow Y$ be a mapping.
	\begin{enumerate}
		\item[({i})] Let $\{ \mu_i \}_{i \in I}$ be a family of $L$-subsets of $X$. Then, $f(\mathop{\cup}\limits_{i \in I} \mu_i) = \mathop{\cup}\limits_{i \in I} f(\mu_i)$ and $f(\mathop{\cap}\limits_{i \in I} \mu_i) \subseteq \mathop{\cap}\limits_{i \in I}f(\mu_i)$.
		\item[({ii})] Let $\mu \in L^X$. Then, $f^{-1}(f(\mu)) \supseteq \mu$. The equality holds if $f$ is injective.
		\item[{(iii)}] Let  $\nu \in L^Y$. Then, $f(f^{-1}(\nu)) \subseteq \nu$. The equality holds if $f$ is surjective.
		\item[{(iv)}] Let $\mu \in L^X$ and $\nu \in L^Y$. Then, $f(\mu) \subseteq \nu$ if and only if $\mu \subseteq f^{-1}(\nu)$. Moreover, if $f$ is injective, then $f^{-1}(\nu) \subseteq \mu$ if and only if $\nu \subseteq f(\mu)$.
	\end{enumerate}
\end{proposition}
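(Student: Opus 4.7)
The plan is to verify each of the four parts by unfolding the definitions of image, preimage and union/intersection of $L$-subsets, and then exploiting the fact that arbitrary suprema in $L$ commute while suprema and infima in general do not. Throughout, the recurring device will be to split on whether $f^{-1}(y)$ is empty or not and, in the injective case, to use $f^{-1}(f(x)) = \{x\}$.

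For part (i), the union equality follows by two applications of the definition and exchanging the order of the suprema: for any $y \in Y$,
\[
f\Bigl(\bigcup_i \mu_i\Bigr)(y) = \bigvee_{x \in f^{-1}(y)} \bigvee_i \mu_i(x) = \bigvee_i \bigvee_{x \in f^{-1}(y)} \mu_i(x) = \Bigl(\bigcup_i f(\mu_i)\Bigr)(y).
\]
The intersection containment is obtained by the analogous computation, using the general inequality $\bigvee_x \bigwedge_i \mu_i(x) \leq \bigwedge_i \bigvee_x \mu_i(x)$ that holds in any complete lattice. For part (ii), the containment $f^{-1}(f(\mu))(x) = \bigvee_{z \in f^{-1}(f(x))} \mu(z) \geq \mu(x)$ is immediate because $x$ itself lies in $f^{-1}(f(x))$; if $f$ is injective, then $f^{-1}(f(x)) = \{x\}$ and the supremum collapses to $\mu(x)$. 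Part (iii) is handled symmetrically: $f(f^{-1}(\nu))(y) = \bigvee_{x \in f^{-1}(y)} \nu(y)$, which is $\nu(y)$ when $f^{-1}(y) \neq \emptyset$ and $0$ otherwise, giving both the general inequality and the promised equality under surjectivity.

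Part (iv) is the adjunction between image and preimage. For the forward implication, assuming $f(\mu) \subseteq \nu$, I would bound $\mu(x) \leq \bigvee_{z \in f^{-1}(f(x))} \mu(z) = f(\mu)(f(x)) \leq \nu(f(x)) = f^{-1}(\nu)(x)$. Conversely, assuming $\mu \subseteq f^{-1}(\nu)$, for any $y$ with $f^{-1}(y) \neq \emptyset$,
\[
f(\mu)(y) = \bigvee_{x \in f^{-1}(y)} \mu(x) \leq \bigvee_{x \in f^{-1}(y)} \nu(f(x)) = \nu(y),
\]
and the case $f^{-1}(y) = \emptyset$ gives $f(\mu)(y) = 0 \leq \nu(y)$. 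For the second, injective claim, if $f^{-1}(\nu) \subseteq \mu$ then for any $y$ in the image $f(\mu)(y) = \mu(x)$ for the unique preimage $x$, and this dominates $f^{-1}(\nu)(x) = \nu(y)$.

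I do not expect any essential obstacle; the only mildly delicate points are the strict inequality in the intersection clause (where one must remember that equality can genuinely fail and so only containment is claimed), and the empty-fibre bookkeeping in parts (iii) and (iv), where $f(\mu)(y)$ is defined as the supremum of the empty set and hence equals $0$. These are purely bookkeeping steps rather than substantive difficulties, so the whole proof is a direct verification, consistent with the result being cited rather than re-established in the paper.
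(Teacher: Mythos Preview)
The paper does not prove this proposition at all; it is simply quoted from \cite{mordeson_comm}. Your direct verification is the natural approach, and parts (i)--(iii) together with the first biconditional in (iv) are handled correctly.

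There is, however, a genuine gap in your treatment of the ``Moreover'' clause in (iv). You only sketch the implication $f^{-1}(\nu) \subseteq \mu \Rightarrow \nu \subseteq f(\mu)$, and only for $y$ in the image of $f$. The omitted case $y \notin f(X)$ is not mere bookkeeping: there $f(\mu)(y) = 0$ while $\nu(y)$ is unconstrained by the hypothesis $f^{-1}(\nu) \subseteq \mu$ (which only sees $\nu$ on $f(X)$), so this implication actually \emph{fails} under injectivity alone. For a concrete counterexample take $X = \{x\}$, $Y = \{y_1, y_2\}$, $f(x) = y_1$, $\mu(x) = 1$, $\nu(y_1) = 0$, $\nu(y_2) = 1$. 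Thus the statement as printed is defective in one direction, and your restriction to ``$y$ in the image'' is silently papering over this. You also omit the converse direction $\nu \subseteq f(\mu) \Rightarrow f^{-1}(\nu) \subseteq \mu$, which is the one that genuinely uses injectivity (via $f^{-1}(f(x)) = \{x\}$) and does hold; that argument should be supplied.
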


Throughout this paper, $G$ denotes an ordinary group with the identity element `$e$' and $I$ denotes a non-empty indexing set. Also, $1_A$ denotes the characteristic function of a non-empty set $A$.

In 1971, Rosenfeld \cite{rosenfeld_fuzzy} applied the notion of fuzzy sets to groups to introduce the fuzzy subgroup of a group. Liu \cite{liu_op}, in 1983, extended the notion of fuzzy subgroups to $L$-fuzzy subgroups ($L$-subgroups), which we define below.   

\begin{definition}
	Let $\mu \in L ^G $. Then, $\mu $ is called an $L$-subgroup of $G$ if for each $x, y\in G$,
	\begin{enumerate}
		\item[({i})] $\mu (xy)\ge \mu (x)\wedge \mu (y)$,
		\item[({ii})] $\mu (x^{-1} )=\mu (x)$.
	\end{enumerate}
	The set of $L$-subgroups of $G$ is denoted by $L(G)$. Clearly, the tip of an $L$-subgroup is attained at the identity element of $G$, that is, $\text{tip}(\mu) = \mu(e)$.
\end{definition}

\begin{theorem}(\cite{mordeson_comm}, Lemma 1.2.5)
	\label{lev_gp}
	Let $\mu \in L ^G $. Then, $\mu $ is an $L$-subgroup of $G$ if and only if each non-empty level subset $\mu_{a} $ is a subgroup of $G$.
\end{theorem}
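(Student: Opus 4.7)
The plan is to prove both implications by translating between pointwise inequalities on $\mu$ and closure conditions on the level sets $\mu_a$, using the definition $\mu_a = \{x \in G \mid \mu(x) \ge a\}$.

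For the forward direction, I would assume $\mu \in L(G)$ and fix any $a \in L$ with $\mu_a \neq \emptyset$. Taking $x, y \in \mu_a$, the inequality $\mu(xy) \ge \mu(x) \wedge \mu(y) \ge a \wedge a = a$ from condition (i) of the definition gives $xy \in \mu_a$, and $\mu(x^{-1}) = \mu(x) \ge a$ from condition (ii) gives $x^{-1} \in \mu_a$. Hence $\mu_a$ is closed under the group operation and inversion, so it is a subgroup of $G$ (the identity lies in $\mu_a$ as $e = xx^{-1}$ for any chosen $x \in \mu_a$).

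For the converse, I would assume every non-empty $\mu_a$ is a subgroup and verify the two defining conditions of an $L$-subgroup. For condition (i), fix $x, y \in G$ and set $a = \mu(x) \wedge \mu(y)$. Then $x, y \in \mu_a$ (the set is non-empty, so it is a subgroup by hypothesis), hence $xy \in \mu_a$, which reads $\mu(xy) \ge a = \mu(x) \wedge \mu(y)$. For condition (ii), set $b = \mu(x)$; then $x \in \mu_b$, so $x^{-1} \in \mu_b$, giving $\mu(x^{-1}) \ge \mu(x)$. Applying the same inequality to $x^{-1}$ in place of $x$ yields $\mu(x) = \mu((x^{-1})^{-1}) \ge \mu(x^{-1})$, so equality holds.

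There is no real obstacle; the argument is a direct translation through the level-set definition. The only minor subtlety worth flagging is that in the converse one must be sure the relevant $\mu_a$ are non-empty before invoking the hypothesis, but by construction $x$ itself lies in each $\mu_a$ we form, so non-emptiness is automatic.
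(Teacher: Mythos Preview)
Your argument is correct and is the standard proof of this level-subset characterization. Note that the paper does not actually supply its own proof of this statement; it is quoted from \cite{mordeson_comm} (Lemma 1.2.5) as a preliminary result, and the proof given there follows exactly the translation you describe.
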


\begin{theorem}(\cite{mordeson_comm}, Theorems 1.2.10, 1.2.11)
	\label{hom_gp}
	Let $f : G \rightarrow H$ be a group homomorphism. Let $\mu \in L(G)$ and $\nu \in L(H)$. Then, $f(\mu) \in L(H)$ and $f^{-1}(\nu) \in L(G)$.
\end{theorem}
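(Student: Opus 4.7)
The plan is to verify the two defining conditions of an $L$-subgroup directly from the formulas $f(\mu)(y) = \bigvee_{x \in f^{-1}(y)} \mu(x)$ and $f^{-1}(\nu)(x) = \nu(f(x))$. The preimage case is essentially a transcription of the homomorphism property, while the image case requires a genuine use of the distributivity of $L$.

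First I would handle $f^{-1}(\nu) \in L(G)$, which is routine. For any $x,y \in G$,
\[
f^{-1}(\nu)(xy) = \nu(f(xy)) = \nu(f(x)f(y)) \ge \nu(f(x)) \wedge \nu(f(y)) = f^{-1}(\nu)(x) \wedge f^{-1}(\nu)(y),
\]
and $f^{-1}(\nu)(x^{-1}) = \nu(f(x)^{-1}) = \nu(f(x)) = f^{-1}(\nu)(x)$. Alternatively, I could appeal to Theorem \ref{lev_gp} after observing that $f^{-1}(\nu)_a = f^{-1}(\nu_a)$, and preimages of ordinary subgroups under $f$ are subgroups.

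Next, for $f(\mu) \in L(H)$, I would first verify the inverse condition via a bijective reindexing: since $z \in f^{-1}(x^{-1}) \Leftrightarrow f(z^{-1}) = x \Leftrightarrow z^{-1} \in f^{-1}(x)$, the map $z \mapsto z^{-1}$ is a bijection between these fibers, and using $\mu(z) = \mu(z^{-1})$ gives
\[
f(\mu)(x^{-1}) = \bigvee_{z \in f^{-1}(x^{-1})} \mu(z) = \bigvee_{w \in f^{-1}(x)} \mu(w^{-1}) = \bigvee_{w \in f^{-1}(x)} \mu(w) = f(\mu)(x).
\]

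The main obstacle is the product condition for $f(\mu)$, because one must exchange a supremum with a meet. Given $x,y \in H$, for every pair $a \in f^{-1}(x)$, $b \in f^{-1}(y)$ we have $ab \in f^{-1}(xy)$ by the homomorphism property, so $f(\mu)(xy) \ge \mu(ab) \ge \mu(a) \wedge \mu(b)$. Taking the join over all such pairs,
\[
f(\mu)(xy) \ge \bigvee_{a \in f^{-1}(x)} \bigvee_{b \in f^{-1}(y)} \bigl(\mu(a) \wedge \mu(b)\bigr) = \Bigl(\bigvee_{a \in f^{-1}(x)} \mu(a)\Bigr) \wedge \Bigl(\bigvee_{b \in f^{-1}(y)} \mu(b)\Bigr) = f(\mu)(x) \wedge f(\mu)(y),
\]
where the middle equality is precisely the infinite distributive law available in the completely distributive lattice $L$. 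Note that the empty-fiber cases are harmless: if either fiber is empty the corresponding join is $0$, and the inequality holds trivially. This distributive step is the only point where the hypothesis that $L$ is completely distributive is essential.
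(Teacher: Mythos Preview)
Your argument is correct. The paper does not supply its own proof of this theorem; it merely records it as a preliminary fact imported from \cite{mordeson_comm}, so there is no in-paper argument to compare against. Your direct verification of the two $L$-subgroup axioms---with the homomorphism property handling $f^{-1}(\nu)$ and the join-infinite distributive law handling the product condition for $f(\mu)$---is the standard route and is essentially what the cited reference does.
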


Let $\eta, \mu\in L^{{G}}$ such that $\eta\subseteq\mu$. Then, $\eta$ is said to be an $L$-subset of $\mu$. The set of all $L$-subsets of $\mu$ is denoted by $L^{\mu}.$
Moreover, if $\eta,\mu\in L(G)$ such that  $\eta\subseteq \mu$, then $\eta$ is said to be an $L$-subgroup of $\mu$. The set of all $L$-subgroups of $\mu$ is denoted by $L(\mu)$.

From now onwards, $\mu$ denotes an $L$-subgroup of $G$ which shall be considered as the parent $L$-group. 

\begin{definition}(\cite{ajmal_sol}) 
	Let $\eta\in L(\mu)$ such that $\eta$ is non-constant and $\eta\ne\mu$. Then, $\eta$ is said to be a proper $L$-subgroup of $\mu$.
\end{definition}

\noindent Clearly, $\eta$ is a proper $L$-subgroup of $\mu$ if and only if $\eta$ has distinct tip and tail and $\eta\ne\mu$.

\begin{definition}(\cite{ajmal_nil})
	Let $\eta \in L(\mu)$. Let $a_0$ and $t_0$ denote the tip and tail of $\eta$, respectively. We define the trivial $L$-subgroup of $\eta$ as follows:
	\[ \eta_{t_0}^{a_0}(x) = \begin{cases}
		a_0 & \text{if } x=e,\\
		t_0 & \text{if } x \neq e.
	\end{cases} \]
\end{definition}

\begin{theorem}(\cite{ajmal_nil}, Theorem 2.1)
	\label{lev_sgp}
	Let $\eta \in L^\mu$. Then, $\eta\in L(\mu)$ if and only if each non-empty level subset $\eta_a$  is a subgroup of $\mu_a$.	
\end{theorem}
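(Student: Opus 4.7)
The plan is to reduce this statement to the already-quoted level-subset characterisation for $L$-subgroups of the full group $G$ (Theorem \ref{lev_gp}), together with the elementary observation recorded in the preliminaries that $\eta \subseteq \mu$ forces $\eta_a \subseteq \mu_a$ for every $a \in L$. The assumption $\eta \in L^{\mu}$ already gives the containment $\eta \subseteq \mu$, so the content of the theorem is really about matching ``$\eta$ is an $L$-subgroup'' with ``each $\eta_a$ is a subgroup''; the ``of $\mu_a$'' part is then automatic from the containment.

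For the forward direction, I would assume $\eta \in L(\mu)$, so in particular $\eta \in L(G)$ and $\eta \subseteq \mu$. Theorem \ref{lev_gp} applied to $\eta$ (viewed in $L(G)$) yields that every non-empty $\eta_a$ is a subgroup of $G$. From $\eta \subseteq \mu$ we get $\eta_a \subseteq \mu_a$, and Theorem \ref{lev_gp} applied to $\mu$ tells us $\mu_a$ is itself a subgroup of $G$. Combining these gives that $\eta_a$ is a subgroup of $\mu_a$.

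For the converse, suppose every non-empty $\eta_a$ is a subgroup of $\mu_a$. Since $\mu_a$ is a subgroup of $G$, each such $\eta_a$ is \emph{a fortiori} a subgroup of $G$. Therefore Theorem \ref{lev_gp} (in its ``if'' direction) delivers $\eta \in L(G)$. Coupled with the hypothesis $\eta \in L^{\mu}$, which supplies $\eta \subseteq \mu$, this gives $\eta \in L(\mu)$.

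There is no genuine obstacle here; the only minor subtlety is bookkeeping about what it means for $\eta_a$ to be a subgroup \emph{of} $\mu_a$ (rather than merely of $G$), and this is handled once one writes down the monotonicity $\eta \subseteq \mu \Rightarrow \eta_a \subseteq \mu_a$ together with Theorem \ref{lev_gp} applied to $\mu$. Thus the proof essentially amounts to invoking Theorem \ref{lev_gp} twice and checking the containment $\eta_a \subseteq \mu_a$.
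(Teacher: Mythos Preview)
Your argument is correct: the result is indeed an immediate corollary of Theorem~\ref{lev_gp} applied twice (once to $\eta$ and once to $\mu$) together with the monotonicity $\eta \subseteq \mu \Rightarrow \eta_a \subseteq \mu_a$. Note that the paper does not supply its own proof of this statement---it is quoted without proof from \cite{ajmal_nil}---so there is no in-paper argument to compare against; your reduction to Theorem~\ref{lev_gp} is exactly the natural route and almost certainly what the cited source does as well.
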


\noindent It is well known in the literature that the intersection of an arbitrary family of $L$-subgroups of an $L$-group $\mu$ is an $L$-subgroup of $\mu$.
  
\begin{definition}(\cite{mordeson_comm})
	Let $\eta \in L^\mu $. Then, the $L$-subgroup of $\mu$ generated by $\eta$ is defined as the smallest $L$-subgroup of $\mu$ which contains $\eta $. It is denoted by $\langle \eta \rangle $, that is,
	\[ \langle \eta \rangle = \cap\{ \theta \in L(\mu) \mid \eta \subseteq \theta \}. \]
\end{definition}

\noindent The normal fuzzy subgroup of a fuzzy group was introduced by Wu \cite{wu_normal} in 1981. We note that for the development of this concept, Wu \cite{wu_normal} preferred  $L$-setting. Below, we recall the notion of a normal $L$-subgroup of an $L$-group:

\begin{definition}(\cite{wu_normal})
	Let $\eta \in L(\mu)$. Then, we say that  $\eta$  is a normal $L$-subgroup of $\mu$   if  
	\begin{center}
		$\eta(yxy^{-1}) \geq \eta(x)\wedge \mu(y)$ for  all  $x,y \in G.$
	\end{center}
\end{definition}

\noindent The set of normal $L$-subgroups of $\mu$  is denoted by $NL(\mu)$. If $\eta \in NL(\mu)$, then we write\vspace{.2cm} $ \eta \triangleleft \mu$. 

\noindent Here, we mention that the arbitrary intersection of a family of normal $L$-subgroups of an $L$-group $\mu$ is again a normal $L$-subgroup of $\mu$. 

\begin{theorem}(\cite{mordeson_comm}, Theorem 1.4.3)
	\label{lev_norsgp}
	Let $\eta \in L(\mu)$. Then, $\eta\in NL(\mu)$ if and only if each non-empty level subset $\eta_a$  is a normal subgroup of $\mu_a$.
\end{theorem}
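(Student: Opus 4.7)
The plan is to prove the biconditional by two standard level-set arguments, one for each direction, leveraging Theorem \ref{lev_sgp} so that I never have to reverify that $\eta_a$ is a subgroup of $\mu_a$ on its own. Throughout, I will use the elementary fact that $\mu(y) \geq a$ and $\eta(x) \geq a$ are equivalent to $x \in \eta_a$ and $y \in \mu_a$.

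First I would handle the forward direction. Assume $\eta \triangleleft \mu$ and fix any $a \in L$ such that $\eta_a \neq \emptyset$. By Theorem \ref{lev_sgp}, $\eta_a$ is already a subgroup of $\mu_a$, so it suffices to show normality. Take $x \in \eta_a$ and $y \in \mu_a$; then $\eta(x) \geq a$ and $\mu(y) \geq a$, and the normality condition gives
\[
\eta(yxy^{-1}) \geq \eta(x) \wedge \mu(y) \geq a,
\]
so $yxy^{-1} \in \eta_a$, as required.

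For the converse, assume each non-empty $\eta_a$ is a normal subgroup of $\mu_a$. Since in particular each non-empty $\eta_a$ is a subgroup of $\mu_a$, Theorem \ref{lev_sgp} already guarantees $\eta \in L(\mu)$, which is also given in the hypothesis. To check the normality inequality, fix $x, y \in G$ and set $a = \eta(x) \wedge \mu(y)$. If $a = 0$, the inequality $\eta(yxy^{-1}) \geq 0$ is automatic. Otherwise $a > 0$, and by construction $x \in \eta_a$ and $y \in \mu_a$, both non-empty. Normality of $\eta_a$ in $\mu_a$ then yields $yxy^{-1} \in \eta_a$, i.e. $\eta(yxy^{-1}) \geq a = \eta(x) \wedge \mu(y)$, which is the required condition.

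There is no real obstacle here; the proof is a routine translation between pointwise inequalities on $L$-subsets and containment in level subsets. The only point demanding a little care is the trivial case $\eta(x) \wedge \mu(y) = 0$ in the converse, which must be separated out because the hypothesis about $\eta_a$ is only assumed for non-empty level subsets (and in any event gives no information when $a = 0$, since the inequality is vacuous).
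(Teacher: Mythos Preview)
Your proof is correct and follows the standard level-set translation argument. Note that the paper does not actually supply its own proof of this statement: it is quoted as Theorem~1.4.3 from \cite{mordeson_comm} and stated without proof, so there is nothing to compare against beyond confirming that your argument is the expected one.

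One minor remark: your case split at $a=0$ in the converse is harmless but not really needed for the reason you give. Since $a=\eta(x)\wedge\mu(y)\leq\eta(x)$, you always have $x\in\eta_a$, so $\eta_a$ is automatically non-empty regardless of whether $a=0$; the hypothesis therefore applies directly in every case.
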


\begin{definition}(\cite{mordeson_comm})
	Let $\mu \in L^X$. Then, $\mu$ is said to possess sup-propery if for each $A \subseteq X$, there exists $a_0 \in A$ such that $ \mathop \vee \limits_{a \in A}  {\mu(a) } = \mu(a_0)$. 
\end{definition}

\noindent Lastly, recall the following form \cite{ajmal_gen, ajmal_sol}:

\begin{theorem}(\cite{ajmal_gen}, Theorem 3.1)
	\label{gen}
	Let $\eta\in L^{^{\mu}}.$ Let $a_{0}=\mathop {\vee}\limits_{x\in G}{\left\{\eta\left(x\right)\right\}}$ and define an $L$-subset $\hat{\eta}$ of $G$ by
	\begin{center}
		$\hat{\eta}\left(x\right)=\mathop{\vee}\limits_{a \leq a_{0}}{\left\{a \mid x\in\left\langle \eta_{a}\right\rangle\right\}}$.
	\end{center}
	
	\noindent Then, $\hat{\eta}\in L(\mu)$ and  $\hat{\eta} =\left\langle \eta \right\rangle$.
\end{theorem}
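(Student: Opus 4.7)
The plan is to decompose the statement into four sub-claims and verify them in turn: (i) $\hat\eta$ satisfies the two defining axioms of an $L$-subgroup of $G$; (ii) the containment $\hat\eta\subseteq\mu$, so that in fact $\hat\eta\in L(\mu)$; (iii) $\eta\subseteq\hat\eta$; and (iv) minimality, i.e., $\hat\eta\subseteq\theta$ for every $\theta\in L(\mu)$ containing $\eta$. Once all four are established, the equality $\hat\eta=\langle\eta\rangle$ follows from the definition of the generated $L$-subgroup as the intersection of all $L$-subgroups of $\mu$ containing $\eta$.

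For step (i), I would rewrite $\hat\eta(x)=\bigvee S_x$ with $S_x=\{a\le a_0 : x\in\langle\eta_a\rangle\}$ and exploit the elementary fact that level subsets are antitone: $\eta_b\subseteq\eta_a$ whenever $a\le b$. Given $a\in S_x$ and $b\in S_y$, the subgroup $\langle\eta_{a\wedge b}\rangle$ then contains both $x$ and $y$, hence contains $xy$, which shows $a\wedge b\in S_{xy}$. Complete distributivity of $L$ converts $\bigl(\bigvee S_x\bigr)\wedge\bigl(\bigvee S_y\bigr)$ into $\bigvee\{a\wedge b : a\in S_x,\,b\in S_y\}$, yielding the product inequality. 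The inverse axiom is immediate, since each $\langle\eta_a\rangle$ is closed under inversion and so $S_x=S_{x^{-1}}$.

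For (ii), the inclusion $\eta\subseteq\mu$ passes to level subsets as $\eta_a\subseteq\mu_a$, and Theorem \ref{lev_gp} ensures $\mu_a$ is a subgroup of $G$ whenever $a\le a_0\le\mu(e)$; hence $\langle\eta_a\rangle\subseteq\mu_a$, and taking the join over $S_x$ gives $\hat\eta\subseteq\mu$. Step (iii) is a one-line observation: set $b=\eta(x)$, so $x\in\eta_b\subseteq\langle\eta_b\rangle$ and $b\in S_x$. For (iv), any $\theta\in L(\mu)$ with $\eta\subseteq\theta$ satisfies $\theta(e)=\text{tip}(\theta)\ge a_0$, so for every $a\le a_0$ the set $\theta_a$ is a subgroup of $G$ containing $\eta_a$; thus $\langle\eta_a\rangle\subseteq\theta_a$, and $\theta(x)\ge\bigvee S_x=\hat\eta(x)$.

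The main obstacle, and the only place that goes beyond routine manipulation, is the product axiom in step (i): this is where complete distributivity of $L$ is essential to exchange the order of the join and the meet. The remaining verifications reduce to elementary properties of level subsets and subgroup generation in the underlying group $G$, together with the level-set characterisation of $L$-subgroups in Theorem \ref{lev_gp}.
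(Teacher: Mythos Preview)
The paper does not supply its own proof of this theorem; it is quoted as a known result from \cite{ajmal_gen} and stated without argument. So there is no in-paper proof to compare against.

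That said, your proposal is correct and is exactly the expected argument for this type of result. Each of the four sub-claims goes through as you describe: in (i) the key computation $a\wedge b\in S_{xy}$ whenever $a\in S_x$ and $b\in S_y$ is right, and the passage from $\bigl(\bigvee S_x\bigr)\wedge\bigl(\bigvee S_y\bigr)$ to $\bigvee\{a\wedge b\}$ needs only the join-infinite distributive law, which complete distributivity certainly implies; (ii)--(iv) are, as you say, routine level-set manipulations. One small remark: in (ii) and (iv) you implicitly use that $\eta_a$ (and hence $\langle\eta_a\rangle$) may be empty for some $a\le a_0$ when the tip is not attained, but since $\langle\emptyset\rangle=\{e\}$ this causes no difficulty in any of your inclusions. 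This is almost certainly the same decomposition used in the original source \cite{ajmal_gen}, as it is the canonical way to verify that a level-set formula gives the generated substructure.
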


\begin{theorem}(\cite{ajmal_gen}, Theorem 3.7)
	\label{gen_sup}
	Let $\eta \in L^{\mu}$ and $a_0 = \mathop{\vee}\limits_{x \in G}\{\eta(x)\}$, then for all $b \leq a_0$, $\langle \eta_b \rangle \subseteq \langle \eta \rangle_b$. Moreover, if $\eta$ possesses the sup-property, then $\langle \eta_b \rangle = \langle \eta \rangle_b$.
\end{theorem}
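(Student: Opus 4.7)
The plan is to treat the two inclusions separately. The inclusion $\langle \eta_b \rangle \subseteq \langle \eta \rangle_b$ holds unconditionally and follows from a minimality argument, while the reverse inclusion, requiring the sup-property, is obtained by unpacking the explicit construction of $\langle \eta \rangle$ provided in Theorem \ref{gen}.

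For the unconditional inclusion, I would start from $\eta \subseteq \langle \eta \rangle$, which holds by definition of the generated $L$-subgroup. Passing to $b$-level subsets preserves containment, yielding $\eta_b \subseteq \langle \eta \rangle_b$. Since $\langle \eta \rangle \in L(\mu) \subseteq L(G)$, Theorem \ref{lev_gp} guarantees that every non-empty level set $\langle \eta \rangle_b$ is a (crisp) subgroup of $G$. But $\langle \eta_b \rangle$ is by definition the smallest subgroup of $G$ containing $\eta_b$, so $\langle \eta_b \rangle \subseteq \langle \eta \rangle_b$ is immediate.

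For the reverse inclusion under the sup-property, I would invoke Theorem \ref{gen} to rewrite $\langle \eta \rangle(x) = \hat\eta(x) = \bigvee\{a \leq a_0 \mid x \in \langle \eta_a \rangle\}$. Fix $x \in \langle \eta \rangle_b$, so that $\hat\eta(x) \geq b$, and set $A = \{a \leq a_0 \mid x \in \langle \eta_a \rangle\}$; the goal reduces to exhibiting some $a \in A$ with $a \geq b$, equivalently, to showing that $\bigvee A \in A$. Note that $A$ is downward closed in $[0, a_0]$: if $a \in A$ and $a' \leq a$ then $\eta_{a'} \supseteq \eta_a$ and hence $\langle \eta_{a'} \rangle \supseteq \langle \eta_a \rangle$. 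Consequently, $\bigvee A \in A$ together with $\bigvee A \geq b$ would give $b \in A$, i.e., $x \in \langle \eta_b \rangle$.

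The main obstacle is establishing the attainment $\bigvee A \in A$, and this is precisely where the sup-property must be leveraged. The sup-property is a pointwise statement: for any $S \subseteq G$, $\bigvee_{y \in S} \eta(y)$ is achieved at some $y \in S$. To connect this to $A$, I would identify each $a \in A$ with a finite factorization $x = y_1^{\epsilon_1} \cdots y_n^{\epsilon_n}$ satisfying $\eta(y_i) \geq a$ for all $i$, and work with the equivalent characterization $\hat\eta(x) = \bigvee\{\bigwedge_{i=1}^n \eta(y_i) \mid x = y_1^{\epsilon_1} \cdots y_n^{\epsilon_n}\}$. The delicate part is arguing that this supremum, ranging over an infinite family of factorizations of varying length, is attained under a pointwise sup-property on $\eta$; this typically requires controlling the relevant factorizations by a single subset of $G$ to which sup-property can be applied coherently. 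Once attainment is secured at some factorization $(y_1^{*}, \ldots, y_k^{*})$, the value $\bigwedge_i \eta(y_i^{*}) = \hat\eta(x) \geq b$ forces $\eta(y_i^{*}) \geq b$ for each $i$, placing every $y_i^{*}$ in $\eta_b$ and giving $x \in \langle \eta_b \rangle$, which completes the proof.
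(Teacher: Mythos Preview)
The paper does not supply a proof of this statement: it is quoted verbatim from \cite{ajmal_gen} (Theorem~3.7) and used as a black box. So there is no ``paper's own proof'' to compare against; I can only assess your argument on its merits.

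Your treatment of the unconditional inclusion $\langle \eta_b\rangle \subseteq \langle \eta\rangle_b$ is clean and correct. For the reverse inclusion, your overall strategy via Theorem~\ref{gen} is right, but you explicitly flag the attainment step as ``delicate'' and then do not carry it out; as written this is a gap, not a proof. The missing observation is short: the sup-property on $\eta$ is equivalent (see the paper's Proposition following Definition~3.17) to $\mathrm{Im}(\eta)$ being a \emph{supstar} subset of $L$, and any supstar subset is automatically a chain, since for $a,b\in\mathrm{Im}(\eta)$ the two-element set $\{a,b\}$ must contain $a\vee b$. Consequently every finite meet $\bigwedge_{i=1}^n \eta(y_i)$ is actually $\min_i \eta(y_i)\in\mathrm{Im}(\eta)$, so the set
\[
B=\Bigl\{\ \textstyle\bigwedge_{i=1}^n \eta(y_i)\ :\ x=y_1^{\epsilon_1}\cdots y_n^{\epsilon_n}\ \Bigr\}
\]
is a subset of $\mathrm{Im}(\eta)$. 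Supstar-ness of $\mathrm{Im}(\eta)$ then gives $\bigvee B\in B$, i.e.\ the supremum is realized by some concrete factorization $x=y_1^{*\,\epsilon_1}\cdots y_k^{*\,\epsilon_k}$ with $\bigwedge_i \eta(y_i^{*})=\hat\eta(x)\ge b$, which is exactly the attainment you needed. With this one-line addition your argument is complete.
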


\begin{theorem}(\cite{ajmal_sol}, Lemma 3.27)
	\label{gen_hom}
	Let $f : G \rightarrow H$ be a group homomorphism, let $\mu \in L(G)$ and $\nu \in L(H)$. Then, for all $\eta \in L^{\mu}$, $\langle f(\eta) \rangle = f(\langle \eta \rangle)$ and for all $\theta \in L^{\nu}$, $\langle f^{-1}(\theta) \rangle = f^{-1}(\langle \theta \rangle).$
\end{theorem}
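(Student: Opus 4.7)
Both identities can be established by showing containment in both directions, combining the universal property of $\langle \cdot \rangle$ (smallest $L$-subgroup containing a given $L$-subset) with the $L$-subgroup preservation results of Theorem \ref{hom_gp} and the adjointness properties of Proposition \ref{hom}.

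For $\langle f(\eta) \rangle = f(\langle \eta \rangle)$, the inclusion $\langle f(\eta) \rangle \subseteq f(\langle \eta \rangle)$ is immediate: $f(\langle \eta \rangle)$ is an $L$-subgroup of $H$ by Theorem \ref{hom_gp} and contains $f(\eta)$ by monotonicity of the image, hence contains the smallest such $L$-subgroup. For the reverse, I would pull back: $f^{-1}(\langle f(\eta) \rangle)$ is an $L$-subgroup of $G$ (Theorem \ref{hom_gp}), and by Proposition \ref{hom}(ii) it contains $f^{-1}(f(\eta)) \supseteq \eta$. By minimality, $\langle \eta \rangle \subseteq f^{-1}(\langle f(\eta) \rangle)$, and applying $f$ together with Proposition \ref{hom}(iii) then gives $f(\langle \eta \rangle) \subseteq f(f^{-1}(\langle f(\eta) \rangle)) \subseteq \langle f(\eta) \rangle$.

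For $\langle f^{-1}(\theta) \rangle = f^{-1}(\langle \theta \rangle)$, the inclusion $\langle f^{-1}(\theta) \rangle \subseteq f^{-1}(\langle \theta \rangle)$ follows by the same pattern: $f^{-1}(\langle \theta \rangle)$ is an $L$-subgroup of $G$ that contains $f^{-1}(\theta)$, and hence contains $\langle f^{-1}(\theta) \rangle$. The reverse inclusion is the substantive step, since the symmetric push-forward trick does not cleanly close the loop back on $G$. My plan is to pass to level subsets via Theorem \ref{gen}. After recording the basic identity $(f^{-1}(\theta))_a = f^{-1}(\theta_a)$, one can rewrite
\[ f^{-1}(\langle \theta \rangle)(x) = \bigvee\nolimits_{b} \{b : f(x) \in \langle \theta_b \rangle\} \quad\text{and}\quad \langle f^{-1}(\theta) \rangle(x) = \bigvee\nolimits_{a} \{a : x \in \langle f^{-1}(\theta_a) \rangle\}, \]
and then show that every $b$ contributing to the first supremum is dominated by some $a$ contributing to the second, i.e., that $f(x) \in \langle \theta_b \rangle$ forces $x \in \langle f^{-1}(\theta_b) \rangle$.

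The main obstacle is precisely this crisp comparison. Expressing $f(x)$ as a product of generators from $\theta_b$ and lifting letter by letter produces some $\tilde{x} \in \langle f^{-1}(\theta_b) \rangle$ with $f(\tilde{x}) = f(x)$, so that $x \tilde{x}^{-1} \in \ker f$; the kernel contribution then needs to be absorbed into $\langle f^{-1}(\theta_b) \rangle$, which is the technically delicate part and will force an argument using the fact that any non-empty $f$-fibre inside $f^{-1}(\theta_b)$ already carries a full coset of $\ker f$. Once this crisp comparison is in place, complete distributivity of $L$ will let us reassemble the suprema over $b$ and conclude the required $L$-subset inequality.
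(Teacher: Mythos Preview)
The paper does not prove this statement; it is quoted from \cite{ajmal_sol} as a preliminary, so there is no in-paper argument to compare against. Your treatment of the first identity $\langle f(\eta)\rangle = f(\langle \eta\rangle)$ via the adjointness of image and preimage together with Theorem~\ref{hom_gp} is correct and is the standard proof.

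The genuine problem is the reverse inclusion $f^{-1}(\langle\theta\rangle)\subseteq\langle f^{-1}(\theta)\rangle$. Your plan reduces it to the crisp implication ``$f(x)\in\langle\theta_b\rangle\Rightarrow x\in\langle f^{-1}(\theta_b)\rangle$'', and you propose to lift a word $f(x)=s_1\cdots s_n$ letter by letter and then absorb the kernel discrepancy. But the lifting step requires each generator $s_i\in\theta_b$ to lie in $\mathrm{Im}(f)$, which is nowhere assumed; and your kernel-absorption device requires $f^{-1}(\theta_b)$ to be nonempty, which can fail for exactly the same reason. In fact the second identity is \emph{false} at the stated level of generality: take $L=\{0,1\}$, $G=\mathbb{Z}$, $H=\mathbb{Z}\times\mathbb{Z}$, $f(n)=(n,0)$, $\nu=1_H$, and $\theta=1_S$ with $S=\{(1,1),(0,1)\}$. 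Then $\langle S\rangle=H$, so $\langle\theta\rangle=1_H$ and $f^{-1}(\langle\theta\rangle)=1_G$; on the other hand no $(n,0)$ lies in $S$, so $f^{-1}(\theta)$ is the zero $L$-subset and $\langle f^{-1}(\theta)\rangle=0$ by the formula of Theorem~\ref{gen}. Thus the identity needs an additional hypothesis --- most naturally surjectivity of $f$ --- under which your lifting-plus-kernel outline does go through (every $s_i$ then has a preimage, and any nonempty fibre forces $\ker f\subseteq\langle f^{-1}(\theta_b)\rangle$). The restatement in this paper appears to have omitted that hypothesis from the original source.
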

	
\section{Abnormal $L$-subgroups}

In classical group theory, abnormal subgroups are defined using the notion of the conjugate of a subgroup. In fuzzy group theory, the concept of the conjugate was introduced by Mukherjee and Bhattacharya \cite{mukherjee_some} in 1986. However, the conjugate discussed therein is by a crisp point of $G$ rather than a fuzzy point and thus cannot be utilized to develop significant notions in $L$-group theory. In \cite{jahan_conj}, the authors have introduced the concept of the conjugate $L$-subgroup by an $L$-point. This definition was shown to be highly compatible with the notions of normality and normalizer of $L$-subgroups of an $L$-group. Moreover, this new notion of the conjugate can be readily utilized to define abnormal $L$-subgroups of an $L$-group. We recall this defintion below:
	
\begin{definition}\label{dfncon}(\cite{jahan_conj})
	 Let $\eta$ be an $L$-subgroup of $\mu$ and $a_z \in \mu$. The conjugate of $\eta$ by $a_z$ is defined to be the $L$-subset $\eta^{a_z}$ given by
	\[ \eta^{a_z}(x) = a \wedge \eta(zxz^{-1}) \quad \text{ for all } x \in G. \]	
\end{definition}

\begin{remark}\label{conj_tip}
	For any $L$-subgroup $\eta$ of $\mu$ and $L$-point $a_z\in \mu$, $\eta^{a_z}$ is an $L$-subgroup of $\mu$. Moreover, $\text{tip}(\eta^{a_z}$) = $a \wedge \text{tip}(\eta)$, since
	\[ \eta^{a_z}(e) = a \wedge \eta(zez^{-1}) = a \wedge \eta(e). \]
\end{remark}

\noindent Below, we recall a level subset characterization for conjugate $L$-subgroups from \cite{jahan_conj}.

\begin{theorem}(\cite{jahan_conj})\label{lvl_conj}
	Let $\eta, \nu \in L(\mu)$ and $a \in L$ such that $\text{tip}(\nu) = a \wedge \text{tip}(\eta)$. Then, $\nu=\eta^{a_z}$ for $a_z \in \mu$ if and only if $\nu_t = {\eta_t}^{z^{-1}}$ for all $t \leq \text{tip}(\nu)$.
\end{theorem}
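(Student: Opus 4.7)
The plan is to prove both directions directly from Definition \ref{dfncon} together with the tautological reconstruction of an $L$-subset from its level subsets, namely $\nu(x) = \bigvee \{t \in L : x \in \nu_t\}$. The paper's notational convention, implicit in the statement, is that $\eta_t^{z^{-1}} = \{x \in G : zxz^{-1} \in \eta_t\}$, so the algebraic content of the level identity to be verified is just ``$x \in \nu_t$ if and only if $\eta(zxz^{-1}) \geq t$.''

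For the forward implication, I would assume $\nu = \eta^{a_z}$ and fix $t \leq \text{tip}(\nu) = a \wedge \text{tip}(\eta)$. Since $t \leq a$, the cap by $a$ in the defining formula $\nu(x) = a \wedge \eta(zxz^{-1})$ is inactive at threshold $t$, so $\nu(x) \geq t$ iff $\eta(zxz^{-1}) \geq t$ iff $zxz^{-1} \in \eta_t$. Translating through the convention above yields $\nu_t = \eta_t^{z^{-1}}$, completing this direction.

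For the converse, I would assume the level identity holds for every $t \leq \text{tip}(\nu)$ and fix an arbitrary $x \in G$. Applying the reconstruction formula to $\nu$, the contributions from $t > \text{tip}(\nu)$ vanish because $\nu_t$ is empty there, while on the range $t \leq \text{tip}(\nu)$ the hypothesis lets me replace the condition $x \in \nu_t$ by $zxz^{-1} \in \eta_t$, i.e.\ by $t \leq \eta(zxz^{-1})$. The supremum therefore collapses to $\text{tip}(\nu) \wedge \eta(zxz^{-1})$, and substituting $\text{tip}(\nu) = a \wedge \text{tip}(\eta)$ and absorbing $\text{tip}(\eta)$ into $\eta(zxz^{-1}) \leq \text{tip}(\eta)$ gives exactly $a \wedge \eta(zxz^{-1}) = \eta^{a_z}(x)$.

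The only step that requires some care is the final lattice collapse in the converse: the hypothesis constrains the level subsets only for $t \leq \text{tip}(\nu)$, and without the tip compatibility $\text{tip}(\nu) = a \wedge \text{tip}(\eta)$ one could not conclude that the truncated supremum recovers the untruncated value $a \wedge \eta(zxz^{-1})$. This is precisely the role of the tip hypothesis in the statement, and once it is invoked the proof closes with no further obstacle.
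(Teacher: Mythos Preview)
Your argument is correct. The forward direction is exactly the observation that for $t \leq a$ one has $a \wedge \eta(zxz^{-1}) \geq t$ iff $\eta(zxz^{-1}) \geq t$, and your identification of the convention $\eta_t^{z^{-1}} = \{x : zxz^{-1} \in \eta_t\}$ matches the paper's usage (see the proof of Theorem~\ref{levsub1}, where $x_i \in \eta_a^{x^{-1}}$ is unpacked as $xx_ix^{-1} \in \eta_a$). The converse is likewise sound: the reconstruction $\nu(x) = \bigvee\{t : t \leq \nu(x)\}$ is trivially valid, the restriction to $t \leq \text{tip}(\nu)$ loses nothing because $\nu_t = \emptyset$ otherwise, and the collapse $\bigvee\{t : t \leq \text{tip}(\nu),\, t \leq \eta(zxz^{-1})\} = \text{tip}(\nu) \wedge \eta(zxz^{-1})$ is just the definition of meet. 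Your remark that the tip hypothesis is precisely what allows the final absorption $\text{tip}(\eta) \wedge \eta(zxz^{-1}) = \eta(zxz^{-1})$ is also on point.

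As for comparison: the present paper does not supply a proof of Theorem~\ref{lvl_conj} at all --- it is quoted from \cite{jahan_conj} without argument --- so there is no in-paper proof to compare against. Your proof is the natural direct verification from Definition~\ref{dfncon} and would serve perfectly well as the omitted argument.
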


\noindent We are now ready to define abnormal $L$-subgroups of an $L$-group. Here, we note that for $L$-subsets $\eta, \nu$ of $\mu$, $\langle \eta, \nu \rangle$ is the $L$-subgroup of $\mu$ generated by $\eta \cup \nu$.

\begin{definition}
	Let $\eta$ be an $L$-subgroup of an $L$-group $\mu$. Then, $\eta$ is said to be an abnormal $L$-subgroup of $\mu$ if for every $L$-point $a_x \in \mu$, $a_x \in \langle \eta, \eta^{a_x} \rangle$. 
\end{definition}

\noindent In Theorem \ref{levsub1}, we provide a level subset characterization for abnormal $L$-subgroups. Firstly, we recall that a lattice $L$ is said to be upper well ordered if every non-empty subset of $L$ contains its supremum. 

\begin{theorem}
	\label{levsub1}
	Let $\eta$ be an $L$-subgroup of $\mu$ such that $\text{tip}(\eta) = \text{tip}(\mu)$. If $\eta_t$ is an abnormal subgroup of $\mu_t$ for all $t \leq \text{tip}(\eta)$, then $\eta$ is an abnormal $L$-subgroup of $\mu$. Conversely, let $L$ be an upper well ordered lattice and $\mu \in L(G)$. If $\eta$ is an abnormal $L$-subgroup of $\mu$, then $\eta_t$ is an abnormal subgroup of $\mu_t$ for all $t \leq \text{tip}(\eta)$.
\end{theorem}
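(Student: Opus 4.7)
The plan is to translate between the $L$-level and the crisp level using Theorem \ref{gen}, which expresses a value $\langle \cdot \rangle (x)$ as a supremum over generated subgroups of crisp level subsets, together with Theorem \ref{lvl_conj}, which identifies the level subsets of a conjugate $L$-subgroup with the corresponding crisp conjugates. Under this translation, the $L$-abnormality condition $a_x \in \langle \eta, \eta^{a_x} \rangle$ becomes a statement about abnormality of $\eta_b$ in $\mu_b$ at each relevant level $b$, which is exactly the crisp hypothesis in question.

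For the forward direction, I would fix an arbitrary $L$-point $a_x \in \mu$ and aim to show $\langle \eta \cup \eta^{a_x} \rangle(x) \geq a$. The hypothesis $\text{tip}(\eta) = \text{tip}(\mu) \geq a$ forces $\text{tip}(\eta^{a_x}) = a$, and Theorem \ref{lvl_conj} then gives $(\eta \cup \eta^{a_x})_b = \eta_b \cup \eta_b^{x^{-1}}$ for every $b \leq a$. Since $x^{-1} \in \mu_b$ and $\eta_b$ is abnormal in $\mu_b$, abnormality applied at $x^{-1}$ yields $x^{-1} \in \langle \eta_b, \eta_b^{x^{-1}} \rangle$, hence $x \in \langle (\eta \cup \eta^{a_x})_b \rangle$. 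Invoking Theorem \ref{gen}, the supremum defining $\langle \eta \cup \eta^{a_x} \rangle(x)$ is therefore at least $a$, which is exactly $a_x \in \langle \eta, \eta^{a_x} \rangle$.

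For the converse, I would fix $t \leq \text{tip}(\eta)$ and $y \in \mu_t$, aiming to show $y \in \langle \eta_t, y \eta_t y^{-1} \rangle$. The $L$-point $t_{y^{-1}}$ lies in $\mu$, so abnormality of $\eta$ gives $\langle \eta \cup \eta^{t_{y^{-1}}} \rangle(y^{-1}) \geq t$. By Theorem \ref{gen}, this value is a supremum over parameters $b$ with $y^{-1} \in \langle (\eta \cup \eta^{t_{y^{-1}}})_b \rangle$, and here the upper-well-ordered hypothesis enters crucially: the supremum is attained at some $b^* \geq t$. I would then split into two cases. If $b^* > t$, then $\text{tip}(\eta^{t_{y^{-1}}}) = t$ forces $(\eta^{t_{y^{-1}}})_{b^*}$ to be empty, so $y^{-1} \in \eta_{b^*} \subseteq \eta_t$ and the conclusion is immediate. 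If $b^* = t$, Theorem \ref{lvl_conj} gives $(\eta \cup \eta^{t_{y^{-1}}})_t = \eta_t \cup y \eta_t y^{-1}$, and $y^{-1}$ (hence $y$) belongs to the subgroup generated by this union, as required.

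The main obstacle I anticipate is the converse direction: extracting a crisp abnormality statement at the specific level $t$ from the $L$-level hypothesis rests on the supremum in Theorem \ref{gen} being attained, and the upper-well-ordered hypothesis is precisely what makes such a level $b^* \geq t$ available. A secondary bookkeeping issue is the convention for group-theoretic conjugation; by placing the $L$-point at the inverse element ($x^{-1}$ in the forward direction and $y^{-1}$ in the converse), the crisp level subsets fall into the exact form in which abnormality must be applied or concluded.
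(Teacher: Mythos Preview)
Your proposal is correct and follows essentially the same route as the paper: both directions translate the $L$-abnormality condition via Theorem~\ref{gen} and the level-subset identification of conjugates from Theorem~\ref{lvl_conj}, applying crisp abnormality at the appropriate level after passing to the inverse element. The only cosmetic difference is in the converse, where you case-split on $b^* > t$ versus $b^* = t$ (exploiting that $(\eta^{t_{y^{-1}}})_{b^*}$ is empty when $b^* > t$), whereas the paper simply includes into level $t$ and invokes the upper-well-ordered hypothesis a second time to split the join $\eta(x_i) \vee \eta^{t_{x^{-1}}}(x_i) \geq t$; both arguments are equivalent.
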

\begin{proof}
	$(\Rightarrow)$ Suppose that $\text{tip}(\eta) = \text{tip}(\mu)$ and $\eta_t$ is an abnormal subgroup of $\mu_t$ for all $t \leq \text{tip}(\eta)$. To show that $\eta$ is an abnormal $L$-subgroup of $\mu$, let $a_x \in \mu$. Then, 
	\[ a \leq \mu(x) \leq \text{tip}(\mu) = \text{tip}(\eta). \]
	By hypothesis, $\eta_a$ is an abnormal $L$-subgroup of $\mu_a$. Moreover, $x^{-1} \in \mu_a$. Thus $x^{-1} \in \langle \eta_a, {\eta_a}^{x^{-1}} \rangle$, that is,
	\[ x^{-1} = x_1 x_2 \ldots x_k, \text{ where } x_i \text{ or } {x_i}^{-1} \in \eta_a \cup {\eta_a}^{x^{-1}}. \]
	Now, if $x_i \in \eta_a$, then $\eta(x_i) \geq a$. On the other hand, if $x_i \in {\eta_a}^{x^{-1}}$, then $xx_ix^{-1} \in \eta_a$, that is, $\eta(xx_ix^{-1}) \geq a$. Thus $\eta^{a_x}(x_i) \geq a$. Hence
	\[ x^{-1} = x_1 x_2 \ldots x_k, \text{ where } (\eta \cup \eta^{a_x})(x_i) \geq a, \] that is,
	\[ x^{-1} = x_1 x_2 \ldots x_k, \text{ where } x_i \text{ or } {x_i}^{-1} \in (\eta \cup \eta^{a_x})_a. \]
	This implies $x^{-1} \in \langle (\eta \cup \eta^{a_x})_a \rangle$.	Hence by Theorem \ref{gen}, 
	\[ \langle \eta, \eta^{a_x} \rangle (x^{-1}) = \mathop{\vee}\limits_{c \leq \eta(e)} {\left\{c \mid x^{-1} \in \langle (\eta \cup \eta^{a_x} )_c \rangle \right\}} \geq a. \]
	Since $\langle \eta, \eta^{a_x} \rangle$ is an $L$-subgroup of $\mu$, $\langle\eta, \eta^{a_x} \rangle (x) \geq a$. Thus $a_x \in \langle \eta, \eta^{a_x} \rangle$. We conclude that $\eta$ is an abnormal $L$-subgroup of $\mu$.  
	
	$(\Leftarrow)$ Conversely, let $L$ be an upper well ordered lattice and let $\mu \in L(G)$. Let $\eta$ be an abnormal $L$-subgroup of $\mu$ and let $t \leq \text{tip}(\eta)$. To show that $\eta_t$ is an abnormal subgroup of $\mu_t$, let $x \in \mu_t$. Then, $t_{x^{-1}} \in \mu$. By abnormality of $\eta$ in $\mu$, $t_{x^{-1}} \in \langle \eta, \eta^{t_{x^{-1}}} \rangle$, that is, $\langle \eta, \eta^{t_{x^{-1}}} \rangle (x^{-1}) \geq t.$ By Theorem \ref{gen},
	\begin{center} 
		$\langle \eta, \eta^{t_{x^{-1}}} \rangle(x^{-1}) = \mathop{\vee}\limits_{c \leq \eta(e)}{\left\{c \mid x^{-1} \in \left\langle (\eta \cup \eta^{t_{x^{-1}}})_c \right\rangle \right\}}.$ 
	\end{center}
	Let $A = \{ c \leq \eta(e) \mid x^{-1} \in\left\langle (\eta \cup \eta^{t_{x^{-1}}})_c \right\rangle \}$. Then, $A$ is a non-empty subset of $L$ since $0 \in L$. Moreover, since $L$ is upper well ordered, $A$ contains its supremum, say $c_0$. Thus $x^{-1} \in \langle (\eta \cup \eta^{t_{x^{-1}}})_{c_0} \rangle$ and $c_0 \geq t$. This implies $(\eta \cup \eta^{t_{x^{-1}}})_{c_0} \subseteq (\eta \cup \eta^{t_{x^{-1}}})_t$ and hence $x^{-1} \in \langle (\eta \cup \eta^{t_{x^{-1}}})_t \rangle$. Thus
	\[ x^{-1} = x_1 x_2 \ldots x_k, \text{ where } {x_i} \text{ or } {x_i}^{-1} \in (\eta \cup \eta^{t_{x^{-1}}})_t, \]
	that is, $(\eta \cup \eta^{t_{x^{-1}}})(x_i) \geq t$. This implies 
	\[ \eta(x_i) \vee \eta^{t_{x^{-1}}}(x_i) \geq t. \]
	Again, since $L$ is upper well ordered, $\eta(x_i) \geq t$ or $\eta^{t_{x^{-1}}}(x_i) \geq t$. If $\eta(x_i) \geq t$, then $x_i \in \eta_t$. On the other hand, if $\eta^{t_{x^{-1}}}(x_i) \geq t$, then 
	\[ \eta(x^{-1} x_i x) \geq t \wedge \eta(x^{-1} x_i x) \geq t, \]
	that is, $x_i \in {\eta_t}^x$. Thus $x_i \in \eta_t \cup {\eta_t}^x$. Therefore
	\[ x^{-1} = x_1 x_2 \ldots x_k, \text{ where } x_i \text{ or } {x_i}^{-1} \in \eta_t \cup {\eta_t}^x. \]
	This implies $x^{-1} \in \langle \eta_t, {\eta_t}^x \rangle$. Since $\langle \eta_t, {\eta_t}^x \rangle$ is a subgroup of $G$, we conclude that $x \in \langle \eta_t, {\eta_t}^x \rangle$.
\end{proof}

\noindent In Theorem \ref{hom_abn}, we discuss the image of an abnormal $L$-subgroup under a surjective group homomorphism. For this, we recall Lemma \ref{hom_conj} from \cite{jahan_conj}:

\begin{lemma}\label{hom_conj} (\cite{jahan_conj})
	Let $f : G \rightarrow H$ be a group homomorphism and $\mu \in L(G)$. Then, for $\eta \in L(\mu)$ and $a_z \in \mu$, the $L$-subgroup $f(\eta^{a_z})$ is a conjugate $L$-subgroup of $f(\eta)$ in $f(\mu)$. In fact, 
	\[ f(\eta^{a_z}) = f(\eta)^{a_{f(z)}}. \]
\end{lemma}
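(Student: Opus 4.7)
The plan is to prove the pointwise equality $f(\eta^{a_z})(y) = f(\eta)^{a_{f(z)}}(y)$ for every $y \in H$; the additional claim that the right-hand side is a legitimate conjugate of $f(\eta)$ in $f(\mu)$ then reduces to checking $a_{f(z)} \in f(\mu)$. This latter check is immediate from the definition of the image, since $f(\mu)(f(z)) = \bigvee_{w \in f^{-1}(f(z))} \mu(w) \geq \mu(z) \geq a$, and Theorem \ref{hom_gp} already guarantees that $f(\eta) \in L(f(\mu))$, so Definition \ref{dfncon} is applicable.

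For the main identity, I would unfold each side at a fixed $y \in H$. Using the definition of $\eta^{a_z}$ together with that of the image, the left-hand side becomes
\[ f(\eta^{a_z})(y) \;=\; \bigvee_{x \in f^{-1}(y)} \bigl(a \wedge \eta(zxz^{-1})\bigr), \]
and by complete distributivity of $L$ the constant $a$ may be pulled out of the supremum, giving $a \wedge \bigvee_{x \in f^{-1}(y)} \eta(zxz^{-1})$. The right-hand side expands, by definition of the conjugate and the image, as
\[ f(\eta)^{a_{f(z)}}(y) \;=\; a \wedge \bigvee_{w \in f^{-1}(f(z)\,y\,f(z)^{-1})} \eta(w). \]

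The key observation is that the map $x \mapsto zxz^{-1}$ is a bijection from $f^{-1}(y)$ onto $f^{-1}(f(z)\,y\,f(z)^{-1})$ with inverse $w \mapsto z^{-1}wz$: a direct use of the homomorphism property gives $f(zxz^{-1}) = f(z)\,y\,f(z)^{-1}$ whenever $f(x) = y$, and symmetrically in the reverse direction. Under this bijection the two suprema range over precisely the same multiset of values of $\eta$, so they coincide, and the pointwise equality follows.

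The argument is essentially bookkeeping, and the only genuinely lattice-theoretic step is the infinite distributivity used to extract $a$ from the join; this is supplied by the standing hypothesis that $L$ is completely distributive. The only potential subtlety worth flagging is that when $f^{-1}(y) = \emptyset$ both suprema are $0$ by convention (and in that case $f(z)\,y\,f(z)^{-1}$ also has empty fiber, since the bijection is still valid as a statement about empty sets), so the degenerate case is handled uniformly without extra work.
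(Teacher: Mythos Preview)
Your argument is correct: the bijection $x \mapsto zxz^{-1}$ between $f^{-1}(y)$ and $f^{-1}\bigl(f(z)\,y\,f(z)^{-1}\bigr)$ is exactly the right device, and the verification that $a_{f(z)} \in f(\mu)$ is handled cleanly. Note, however, that the paper does not supply its own proof of this lemma; it is merely recalled from \cite{jahan_conj}, so there is no in-paper argument to compare your approach against.
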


\begin{lemma}\label{hom_lev}
	Let $f:G \rightarrow H$ be a group homomorphism and $\mu \in L(G)$. Then, for $\eta \in L(\mu)$, 
	\[ f(\eta_t) \subseteq f(\eta)_t \]
	for all $t \leq \text{tip}(\eta)$.
\end{lemma}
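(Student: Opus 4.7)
The plan is to unpack the definitions of the image $L$-subset and of a level subset and verify the containment pointwise on $H$. Take an arbitrary $y \in f(\eta_t)$; then by definition of $f(\eta_t)$ there exists some $x \in G$ with $f(x) = y$ and $\eta(x) \geq t$. I would like to conclude that $y$ lies in $f(\eta)_t$, i.e., that $f(\eta)(y) \geq t$.

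To do this, I would use the definition of the image $L$-subset recalled before Proposition \ref{hom}, namely $f(\eta)(y) = \bigvee_{x' \in f^{-1}(y)} \eta(x')$. Since $x \in f^{-1}(y)$ contributes to this supremum and $\eta(x) \geq t$, it follows immediately that $f(\eta)(y) \geq \eta(x) \geq t$. Hence $y \in f(\eta)_t$, which establishes the desired inclusion $f(\eta_t) \subseteq f(\eta)_t$.

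There is really no obstacle here: the lemma is a direct pointwise consequence of the definitions, and the hypothesis $t \leq \text{tip}(\eta)$ is not needed for the inclusion itself (it only serves to guarantee that the level set $\eta_t$ is non-trivial in typical applications; if $\eta_t$ happens to be empty, the inclusion holds vacuously). I would therefore keep the proof short, essentially limited to the chain of inequalities $f(\eta)(y) \geq \eta(x) \geq t$ justified by the definition of $f(\eta)$.
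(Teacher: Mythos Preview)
Your proof is correct and is exactly the standard one-line argument from the definitions. The paper itself states Lemma~\ref{hom_lev} without proof, presumably because it is considered immediate; your write-up supplies precisely the expected justification, and your remark that the hypothesis $t \le \text{tip}(\eta)$ is not actually needed for the bare inclusion is accurate.
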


\begin{theorem}\label{hom_abn}
	Let $f : G \rightarrow H$ be a surjective group homomorphism. Let $\mu \in L(G)$ such that $\mu$ possesses sup-property. If $\eta$ is an abnormal $L$-subgroup of $\mu$, then $f(\eta)$ is an abnormal $L$-subgroup of f($\mu$).
\end{theorem}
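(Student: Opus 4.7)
The plan is to fix an arbitrary $L$-point $a_y \in f(\mu)$ and deduce $a_y \in \langle f(\eta), f(\eta)^{a_y}\rangle$. The sup-property of $\mu$ is there precisely so I can lift $a_y$ back to an $L$-point of $\mu$; abnormality of $\eta$ will be invoked at this lift, and Lemma \ref{hom_conj} together with Theorem \ref{gen_hom} will transport the resulting membership forward under $f$.

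First, given $a_y \in f(\mu)$, I have $a \leq f(\mu)(y) = \bigvee_{x \in f^{-1}(y)} \mu(x)$. Applying the sup-property of $\mu$ to the set $f^{-1}(y)$ yields some $z \in f^{-1}(y)$ with $\mu(z) = f(\mu)(y) \geq a$, so $a_z \in \mu$. Since $\eta$ is abnormal in $\mu$, this gives $a_z \in \langle \eta, \eta^{a_z}\rangle$, i.e., $\langle \eta, \eta^{a_z}\rangle(z) \geq a$. Taking the image under $f$ and using $f(z) = y$, the definition of $f(\cdot)$ on $L$-subsets gives
\[ f(\langle \eta, \eta^{a_z}\rangle)(y) \;\geq\; \langle \eta, \eta^{a_z}\rangle(z) \;\geq\; a. \]

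Next I would identify the left-hand side with $\langle f(\eta), f(\eta)^{a_y}\rangle(y)$. Theorem \ref{gen_hom} gives $f(\langle \eta \cup \eta^{a_z}\rangle) = \langle f(\eta \cup \eta^{a_z})\rangle$, Proposition \ref{hom}(i) gives $f(\eta \cup \eta^{a_z}) = f(\eta) \cup f(\eta^{a_z})$, and Lemma \ref{hom_conj} gives $f(\eta^{a_z}) = f(\eta)^{a_{f(z)}} = f(\eta)^{a_y}$. Chaining these yields $f(\langle \eta, \eta^{a_z}\rangle) = \langle f(\eta), f(\eta)^{a_y}\rangle$, so $\langle f(\eta), f(\eta)^{a_y}\rangle(y) \geq a$, which is exactly $a_y \in \langle f(\eta), f(\eta)^{a_y}\rangle$, as required.

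There is no real obstacle; the proof is a packaging of the compatibility results already established. The one subtle point, which is where the hypotheses are used, is that the sup-property of $\mu$ is essential: it lets me replace the join defining $f(\mu)(y)$ by an honest value $\mu(z)$ at a specific preimage, so that the abnormality of $\eta$ can be applied at the $L$-point $a_z$. Without sup-property, one would only know $f(\mu)(y) \geq a$ as a supremum of $\mu$-values and could not pick any single $z \in f^{-1}(y)$ at which to invoke abnormality; similarly, surjectivity of $f$ is what ensures that every $L$-point of $f(\mu)$ is of the form $a_y$ with $y \in f(G)$, so that a preimage exists to begin with.
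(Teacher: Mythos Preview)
Your proof is correct and follows the same overall strategy as the paper's: lift the $L$-point $a_y$ to an $L$-point $a_z$ of $\mu$ via the sup-property, apply abnormality of $\eta$ there, and then push the resulting membership forward under $f$ using the compatibility of $f$ with conjugation.

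Where you differ is in the execution of the push-forward step. The paper works at the level of level subsets: it unpacks $\langle \eta, \eta^{a_s}\rangle(s)$ using the explicit formula of Theorem~\ref{gen}, writes $s$ as a word in elements of $(\eta \cup \eta^{a_s})_c$, applies $f$ to this word, and uses Lemma~\ref{hom_lev} ($f(\eta_t) \subseteq f(\eta)_t$) together with Lemma~\ref{hom_conj} to land in the correct level set on the $H$-side. You instead invoke Theorem~\ref{gen_hom} directly, which already packages the statement $f(\langle\,\cdot\,\rangle) = \langle f(\,\cdot\,)\rangle$, and combine it with Proposition~\ref{hom}(i) and Lemma~\ref{hom_conj} to get $f(\langle \eta, \eta^{a_z}\rangle) = \langle f(\eta), f(\eta)^{a_y}\rangle$ in one line. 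Your route is shorter and cleaner, avoiding the element-level bookkeeping and the auxiliary Lemma~\ref{hom_lev}; the paper's route is more self-contained in that it does not appeal to the external result Theorem~\ref{gen_hom}. Both are valid, and your observation about where exactly sup-property and surjectivity enter is accurate.
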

\begin{proof}
	Let $a_x \in f(\mu)$. We have to show that $a_x \in \langle f(\eta), f(\eta)^{a_x} \rangle$. Since $a_x \in f(\mu)$, $f(\mu)(x) \geq a$. By definition,
	\[ f(\mu)(x) = \vee \{ \mu(g) \mid g \in f^{-1}(x) \}. \]
	Let $A = \{ g \in G \mid g \in f^{-1}(x) \}$. Since $f$ is a surjection, $A$ is a non-empty subset of $G$. Since $\mu$ possesses the sup-property, there exists $s \in A$ such that 
	\[ a \leq f(\mu)(x) = \vee \{ \mu(g) \mid g \in A \} = \mu(s). \] 
	Hence $f(s) = x$ and $a_s \in \mu$. Now, since $\eta$ is an abnormal $L$-subgroup of $\mu$, $a_s \in \langle \eta, \eta^{a_s} \rangle$, that is, $\langle \eta, \eta^{a_s} \rangle(s) \geq a$. By Theorem \ref{gen},
	\[  \langle \eta, \eta^{a_s} \rangle(s) = \mathop{\vee}\limits_{c \leq \eta(e)}\left\{c \mid s \in \langle(\eta \cup \eta^{a_s})_c \rangle \right\} \]
	Let $c \leq \eta(e)$ such that $s \in \langle (\eta \cup \eta^{a_s})_c \rangle$. Then,
	\[ s = s_1 s_2 \ldots s_n, \text{ where } s_i \text{ or } {s_i}^{-1} \in (\eta \cup \eta^{a_s})_c. \]
	This implies
	\[ x = f(s) = f(s_1)f(s_2)\ldots f(s_n), \text{ where } f(s_i) \text{ or } f(s_i)^{-1} \in f((\eta \cup \eta^{a_s})_c).\]
	By Lemma \ref{hom_lev}, $f((\eta \cup \eta^{a_s})_c) \subseteq (f(\eta \cup \eta^{a_s}))_c = (f(\eta) \cup f(\eta^{a_s}))_c$. Also, by Theorem \ref{hom_conj}, $(f(\eta^{a_s})) = f(\eta)^{a_{f(s)}} = f(\eta)^{a_x}$. Hence
	\[ x = f(s_1)f(s_2)\ldots f(s_n), \text{ where } f(s_i) \text{ or } f(s_i)^{-1} \in (f(\eta) \cup f(\eta)^{a_x})_c, \]
	that is, $x \in \langle f(\eta) \cup f(\eta^{a_x}))_c \rangle$. Thus
	\begin{equation*}
		\begin{split}
			\langle f(\eta), f(\eta)^{a_x} \rangle (x) &= \mathop{\vee}\limits_{c \leq f(\eta)(e)}\left\{c \mid x \in \langle(f(\eta) \cup f(\eta)^{a_x})_c \rangle \right\}\\
			&\geq \mathop{\vee}\limits_{c \leq \eta(e)}\left\{c \mid s \in \langle(\eta \cup \eta^{a_s})_c \rangle \right\}\\
			&= \langle \eta, \eta^{a_s} \rangle (s)\\
			&\geq a.  
		\end{split}
	\end{equation*}
	Hence $a_x \in \langle f(\eta), f(\eta)^{a_x} \rangle$. We conclude that $f(\eta)$ is an abnormal $L$-subgroup of $f(\mu)$.
\end{proof}

\noindent As a motivation for the definition of abnormal $L$-subgroups, we have the following result:

\begin{proposition}
	Let $H$ and $K$ be subgroups of $G$ such that $H \subseteq K$. Then, $H$ is an abnormal subgroup of $K$ if and only if $1_H$ is an abnormal $L$-subgroup of $1_K$.
\end{proposition}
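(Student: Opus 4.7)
The plan is to unwind the definitions directly and exploit the fact that characteristic functions take only the values $0$ and $1$, so their level structure is trivial. The main computational lemma I would establish first is: for any $z \in K$ and any $a \in L$, the conjugate $(1_H)^{a_z}$ satisfies
\[ (1_H)^{a_z}(y) = a \wedge 1_H(zyz^{-1}) = \begin{cases} a & \text{if } y \in z^{-1}Hz = H^z, \\ 0 & \text{otherwise.} \end{cases} \]
In particular, $(1_H)^{1_z} = 1_{H^z}$. Using Theorem \ref{gen}, together with the observation that for $0 < t \leq a$ the level set $(1_H \cup (1_H)^{a_z})_t$ equals $H \cup H^z$, while for $a < t \leq 1$ it equals $H$, I can conclude that $\langle 1_H, (1_H)^{a_z}\rangle(y) \geq a$ precisely when $y \in \langle H, H^z\rangle$, and that $\langle 1_H, 1_{H^z}\rangle = 1_{\langle H, H^z\rangle}$.

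For the forward direction, assume $H$ is abnormal in $K$ and let $a_x \in 1_K$. If $a = 0$ the containment $a_x \in \langle 1_H, (1_H)^{a_x}\rangle$ is immediate. Otherwise $1_K(x) \geq a > 0$ forces $x \in K$, whence the classical abnormality of $H$ in $K$ gives $x \in \langle H, H^x\rangle$. By the computation above this yields $\langle 1_H, (1_H)^{a_x}\rangle(x) \geq a$, i.e., $a_x \in \langle 1_H, (1_H)^{a_x}\rangle$.

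For the converse, assume $1_H$ is an abnormal $L$-subgroup of $1_K$ and let $x \in K$. Then $1_K(x) = 1$, so $1_x \in 1_K$, and by hypothesis $1_x \in \langle 1_H, (1_H)^{1_x}\rangle$. Since $(1_H)^{1_x} = 1_{H^x}$ and $\langle 1_H, 1_{H^x}\rangle = 1_{\langle H, H^x\rangle}$, this reads $1_{\langle H, H^x\rangle}(x) \geq 1$, so $x \in \langle H, H^x\rangle$. Hence $H$ is abnormal in $K$.

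I do not expect any serious obstacle: the only slightly delicate point is justifying the identity $\langle 1_H, 1_{H^z}\rangle = 1_{\langle H, H^z\rangle}$, but this is routine from Theorem \ref{gen} once one notes that every nonzero level of $1_H \cup 1_{H^z}$ is the set $H \cup H^z$, whose generated subgroup is $\langle H, H^z\rangle$. No appeal to Theorem \ref{levsub1} (and hence to upper well orderedness of $L$) is needed, since the characteristic function case is transparent enough to handle by hand.
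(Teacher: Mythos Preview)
Your argument is correct. The paper actually states this proposition without proof (it appears as a motivational remark just before Example~1), so there is nothing to compare against directly; your direct unwinding of the definitions is precisely the natural route. One minor notational point: in the paper's convention (visible in Theorem~\ref{lvl_conj}) one has $H^{g}=gHg^{-1}$, so the set you call $H^{z}=z^{-1}Hz$ would be written $H^{z^{-1}}$ there. This is harmless, since the condition ``$x\in\langle H,\,xHx^{-1}\rangle$ for all $x\in K$'' is equivalent to ``$x\in\langle H,\,x^{-1}Hx\rangle$ for all $x\in K$'' by replacing $x$ with $x^{-1}$. You are also right to bypass Theorem~\ref{levsub1}: its converse direction needs $L$ to be upper well ordered, which is not assumed here, whereas your hands-on computation with characteristic functions works over any completely distributive lattice.
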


\noindent Below, we demonstrate abnormal $L$-subgroups of an $L$-group with an example:

\begin{example}
	Let $G = S_4$ and $L$ be lattice given by Figure 1. 
	\begin{center}
		\includegraphics[scale=0.5]{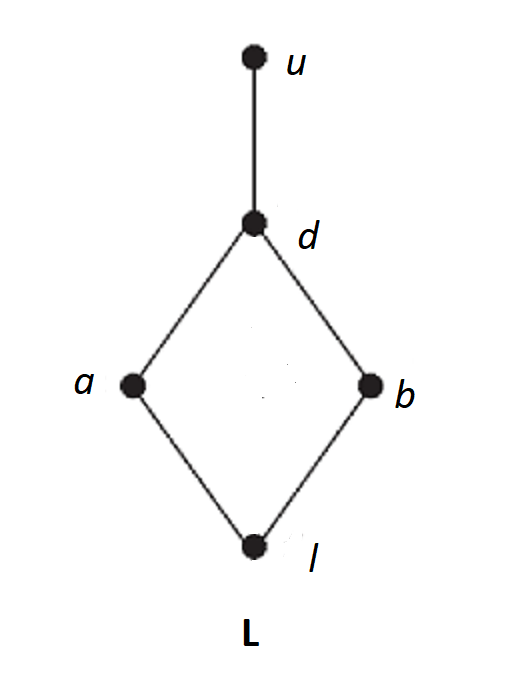}\\
		\centering \text{Figure 1}		
	\end{center}
	Let 
	\[H_1 = \{ \epsilon, (12), (13), (23), (123), (132) \}, H_2 = \{ \epsilon, (12), (14), (24), (124), (142) \}, \]
	\[ H_3 = \{ \epsilon, (14), (13), (14), (134), (143) \} \text{ and } H_4 = \{ \epsilon, (23), (24), (34), (234), (243) \} \]
	be the four copies of $S_3$ in $S_4$, where $\epsilon$ denotes the identity permutation.
	Define an $L$-subgroup $\mu$ of $S_4$ as follows:
	\[ \mu(x) = \begin{cases}
		u &\text{if } x \in \langle (12) \rangle; \\
		d &\text{otherwise}. 
	\end{cases} \]
	Since $\mu_t$ is a subgroup of $G$ for all $t \leq \mu(e)$, it follows from Theorem \ref{lev_gp} that $\mu \in L(G)$. Now, define an $L$-subset $\eta$ of $\mu$ by
	\[ \eta(x) = \begin{cases}
		u &\text{if } x \in \langle (12) \rangle; \\
		a &\text{if } x \in H_1 \setminus \langle (12) \rangle; \\
		b &\text{if } x \in H_2 \setminus \langle (12) \rangle; \\
		l &\text{otherwise}.
	\end{cases} \]
	Again, since each $\eta_t$ is a subgroup of $\mu_t$, by Theorem \ref{lev_sgp}, $\eta \in L(\mu)$. We show that $\eta$ is an abnormal $L$-subgroup of $\mu$. 
	
	Firstly, consider the $L$-point $d_{(34)} \in \mu$. The conjugate $L$-subgroup $\eta^{d_{(34)}}$ is given by
	\begin{equation*}
	\begin{split}
		\eta^{d_{(34)}} (x) &= d \wedge \eta((34)x(34)) \\
		&= \begin{cases}
			d &\text{if } x \in \langle (12) \rangle; \\
			a &\text{if } x \in H_2 \setminus \langle (12) \rangle; \\
			b &\text{if } x \in H_1 \setminus \langle (12) \rangle; \\
			l &\text{otherwise.}
		\end{cases}
	\end{split}
	\end{equation*}
	Now, by Theorem \ref{gen},
	\[ \langle \eta, \eta^{d_{(34)}} \rangle = \mathop{\vee}\limits_{t \leq u} \{ t \mid x \in \langle (\eta \cup \eta^{d_{(34)}})_t \rangle  \}. \]
	Note that $\eta_a = H_1, \eta_b = H_2, (\eta^{d_{(34)}})_a = H_2$ and $(\eta^{d_{(34)}})_b = H_1$. Moreover,
	\[ (34) = (24)(132)(124) \in \langle H_1, H_2 \rangle. \]
	Hence
	\[ \langle \eta, \eta^{d_{(34)}} \rangle (34) = a \vee b = d. \]
	Thus $d_{(34)} \in \langle \eta, \eta^{d_{(34)}} \rangle$.  
	Similarly, it can be easily verified that $c_x \in \langle \eta, \eta^{c_x} \rangle$ for all $c_x \in \mu$. Hence $\eta$ is an abnormal $L$-subgroup of $\mu$.  
\end{example}

\noindent We now explore the various interactions of abnormal $L$-subgroups with the notions of normality and normalizer of $L$-subgroups of an $L$-group. More importantly, we generalize the various properties of abnormal subgroups to the $L$-setting. We begin by showing that the only $L$-subgroup af an $L$-group that is both normal and abnormal is the whole $L$-group (Theorem \ref{abn_nor}). Firstly, we recall the following from \cite{jahan_conj}: 

\begin{lemma}(\cite{jahan_conj})
	\label{nor_conj}
	Let $\eta \in L(\mu)$. Then, $\eta$ is a normal $L$-subgroup of $\mu$ if and only if  $\eta^{a_z} \subseteq \eta$ for every $L$-point $a_z \in \mu$. Moreover, if $\eta \in NL(\mu)$ and $\text{tip}(\eta^{a_z}) = \text{tip}(\eta)$, then $\eta^{a_z} = \eta$. 
\end{lemma}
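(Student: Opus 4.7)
The plan is to unpack both sides of the biconditional directly from the definition $\eta^{a_z}(x) = a \wedge \eta(zxz^{-1})$ and the normality condition $\eta(yxy^{-1}) \geq \eta(x) \wedge \mu(y)$, and then derive the \emph{moreover} statement from the tip computation in Remark \ref{conj_tip}.

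For the forward direction, I assume $\eta \in NL(\mu)$ and fix an $L$-point $a_z \in \mu$, so $\mu(z) \geq a$ and hence $\mu(z^{-1}) \geq a$ as well. For any $x \in G$, writing $x = z^{-1}(zxz^{-1})z$ and applying the normality inequality with $y = z^{-1}$ to the element $zxz^{-1}$ gives $\eta(x) \geq \eta(zxz^{-1}) \wedge \mu(z^{-1}) \geq \eta(zxz^{-1}) \wedge a = \eta^{a_z}(x)$. Thus $\eta^{a_z} \subseteq \eta$.

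For the converse, I assume $\eta^{a_z} \subseteq \eta$ for every $a_z \in \mu$, and I want $\eta(yxy^{-1}) \geq \eta(x) \wedge \mu(y)$ for arbitrary $x, y \in G$. The natural choice is to set $a = \mu(y)$ and take the $L$-point $a_{y^{-1}}$, which lies in $\mu$ because $\mu(y^{-1}) = \mu(y) = a$. Evaluating the containment $\eta^{a_{y^{-1}}} \subseteq \eta$ at the element $yxy^{-1}$ yields
\[ \eta(yxy^{-1}) \geq \eta^{a_{y^{-1}}}(yxy^{-1}) = a \wedge \eta\bigl(y^{-1}(yxy^{-1})y\bigr) = \mu(y) \wedge \eta(x), \]
which is exactly what is required.

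For the moreover statement, I start from $\eta \in NL(\mu)$ together with the hypothesis $\text{tip}(\eta^{a_z}) = \text{tip}(\eta)$. By Remark \ref{conj_tip} the left-hand side equals $a \wedge \text{tip}(\eta)$, forcing $a \geq \text{tip}(\eta)$, so in particular $a \geq \eta(x)$ and $a \geq \eta(zxz^{-1})$ for every $x \in G$. Applying normality both to $x$ and to $zxz^{-1}$ (the latter via $x = z^{-1}(zxz^{-1})z$, exactly as in the forward direction) yields the two inequalities $\eta(zxz^{-1}) \geq \eta(x) \wedge \mu(z) \geq \eta(x) \wedge a = \eta(x)$ and $\eta(x) \geq \eta(zxz^{-1}) \wedge a = \eta(zxz^{-1})$. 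Hence $\eta(zxz^{-1}) = \eta(x)$ and therefore $\eta^{a_z}(x) = a \wedge \eta(zxz^{-1}) = a \wedge \eta(x) = \eta(x)$, giving $\eta^{a_z} = \eta$. I do not expect a genuine obstacle here; the only subtle point is recognizing that the hypothesis $\text{tip}(\eta^{a_z}) = \text{tip}(\eta)$ is exactly the handle that upgrades the containment $\eta^{a_z} \subseteq \eta$ to equality, by forcing $a$ to dominate every value of $\eta$.
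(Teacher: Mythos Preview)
Your argument is correct in all three parts: the forward direction, the converse, and the \emph{moreover} clause are each handled by direct manipulation of the definitions, and the key observation that $a \wedge \text{tip}(\eta) = \text{tip}(\eta)$ forces $a \geq \eta(x)$ for every $x$ is exactly what is needed to upgrade containment to equality. Note that the paper itself does not supply a proof of this lemma---it is recalled from \cite{jahan_conj} without argument---so there is no in-paper proof to compare against; your direct computation is the natural one and would serve as a complete proof.
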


\begin{theorem}
	\label{abn_nor}
	Let $\mu \in L(G)$. Then, the only $L$-subgroup of $\mu$ that is both normal and abnormal in $\mu$ is $\mu$ itself.
\end{theorem}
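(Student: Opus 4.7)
The plan is to exploit the tension between the two hypotheses directly at the level of $L$-points. Fix an arbitrary $L$-point $a_x \in \mu$. Since $\eta$ is normal in $\mu$, Lemma \ref{nor_conj} gives $\eta^{a_x} \subseteq \eta$. Consequently $\eta \cup \eta^{a_x} = \eta$, and so
\[
\langle \eta, \eta^{a_x} \rangle \;=\; \langle \eta \rangle \;=\; \eta,
\]
the last equality because $\eta$ is already an $L$-subgroup of $\mu$.

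Now I invoke abnormality: by definition, $a_x \in \langle \eta, \eta^{a_x} \rangle$. Combining this with the displayed equation gives $a_x \in \eta$, i.e., $\eta(x) \geq a$. Since this holds for every $L$-point $a_x$ of $\mu$, I may take in particular $a = \mu(x)$, which yields $\eta(x) \geq \mu(x)$ for every $x \in G$. The containment $\eta \subseteq \mu$ is built into the hypothesis $\eta \in L(\mu)$, so the reverse containment just established forces $\eta = \mu$.

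There is no real obstacle; the proof is essentially a two-line observation, and the only subtlety is making sure to quote the correct half of Lemma \ref{nor_conj} (the containment $\eta^{a_x} \subseteq \eta$ for every $a_x \in \mu$), and to recognize that abnormality has to be applied for arbitrary choices of $a$ (not just $a = \text{tip}(\mu)$) in order to recover $\eta(x) \geq \mu(x)$ pointwise.
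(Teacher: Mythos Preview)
Your proof is correct and follows essentially the same approach as the paper's: both use Lemma \ref{nor_conj} to collapse $\langle \eta, \eta^{a_x} \rangle$ to $\eta$ and then invoke abnormality to conclude $\mu(x) \leq \eta(x)$. The only cosmetic difference is that the paper fixes $a = \mu(x)$ from the outset rather than starting with an arbitrary $a_x \in \mu$ and specializing at the end.
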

\begin{proof}
	Let $\eta \in L(\mu)$ such that $\eta$ is both a normal as well as an abnormal $L$-subgroup of $\mu$. Let $x \in G$ and $a=\mu(x)$. Then, $a_x \in \mu$. By the abnormality of $\eta$, $a_x \in \langle \eta, \eta^{a_x} \rangle$. Now, since $\eta$ is a normal $L$-subgroup of $\mu$, it follows from Lemma \ref{nor_conj} that $\eta^{a_x} \subseteq \eta$. Thus
	\[ a_x \in \langle \eta, \eta^{a_x} \rangle = \eta. \]
	Therefore $\mu(x) = a \leq \eta(x)$, that is, $\eta(x) = \mu(x)$. We conclude that $\eta = \mu$.
\end{proof}

\begin{proposition}\label{inc_lvl}
	Let $\eta, \nu \in L(\mu)$. Then, $\eta \subseteq \nu$ if and only if $\eta_t \subseteq \nu_t$ for all $t \leq \text{tip}(\eta)$.
\end{proposition}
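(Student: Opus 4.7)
The plan is to handle the two implications separately, with the forward direction being an immediate unpacking of definitions and the converse relying on the simple trick of choosing the threshold to be exactly the value $\eta(x)$.

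For the forward direction, I would suppose $\eta \subseteq \nu$ and fix any $t \leq \text{tip}(\eta)$. If $x \in \eta_t$ then $\eta(x) \geq t$, and since $\nu(x) \geq \eta(x)$ by hypothesis, we get $\nu(x) \geq t$, i.e., $x \in \nu_t$. Hence $\eta_t \subseteq \nu_t$. Note that the restriction $t \leq \text{tip}(\eta)$ is harmless in this direction, since for $t > \text{tip}(\eta)$ the level subset $\eta_t$ is empty and containment is vacuous.

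For the converse, I would fix an arbitrary $x \in G$ and set $t := \eta(x)$. Since $t = \eta(x) \leq \mathop{\vee}\limits_{y \in G} \eta(y) = \text{tip}(\eta)$, the hypothesis applies at this value of $t$. Clearly $x \in \eta_t$ by construction, so by hypothesis $x \in \nu_t$, which gives $\nu(x) \geq t = \eta(x)$. Since $x$ was arbitrary, $\eta \subseteq \nu$.

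There is no real obstacle here; the only mild subtlety worth flagging is why it suffices to check the level subset containment for $t \leq \text{tip}(\eta)$ rather than for all $t \in L$, and this is precisely addressed by taking $t = \eta(x)$ in the converse, which automatically lies below $\text{tip}(\eta)$. The proof should be very short, essentially two or three lines for each direction.
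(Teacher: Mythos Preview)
Your proof is correct. The paper states this proposition without proof, so there is nothing to compare against; your argument is the standard one and exactly what is needed here.
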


\begin{proposition}(\cite{jahan_conj})\label{conj_inv}
	Let $\eta \in L(\mu)$ and $a_x$ be an $L$-point of $\mu$. Then, $\eta^{a_x} \subseteq \eta$ if and only if $\eta^{a_{x^{-1}}} \subseteq \eta$.
\end{proposition}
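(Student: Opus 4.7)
The plan is to descend to level subsets, where the claim reduces to a standard symmetry between $x H x^{-1} \subseteq H$ and $x^{-1} H x \subseteq H$ for crisp subgroups $H$ of $G$. I would first invoke Proposition \ref{inc_lvl} to reformulate $\eta^{a_x} \subseteq \eta$ as the family of inclusions $(\eta^{a_x})_t \subseteq \eta_t$ for every $t \leq \text{tip}(\eta^{a_x})$. Remark \ref{conj_tip} gives $\text{tip}(\eta^{a_x}) = \text{tip}(\eta^{a_{x^{-1}}}) = a \wedge \text{tip}(\eta)$, so the same range of levels $t$ controls both inclusions $\eta^{a_x} \subseteq \eta$ and $\eta^{a_{x^{-1}}} \subseteq \eta$ that I am trying to compare.

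Next I would identify the level subsets explicitly. Either by unpacking Definition \ref{dfncon} directly, or by invoking Theorem \ref{lvl_conj} with $\nu = \eta^{a_x}$ and $\nu = \eta^{a_{x^{-1}}}$ respectively, one obtains $(\eta^{a_x})_t = x^{-1} \eta_t x$ and $(\eta^{a_{x^{-1}}})_t = x \eta_t x^{-1}$ for each $t \leq a \wedge \text{tip}(\eta)$. The biconditional to be proved therefore reduces, level by level, to the crisp assertion that, for the subgroup $H = \eta_t$ of $G$ and the element $x \in G$, the containment $x^{-1} H x \subseteq H$ holds if and only if $x H x^{-1} \subseteq H$ does.

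The remaining, and only substantive, step is this crisp equivalence, which I expect to be the main obstacle. It rests on the fact that the inner automorphism $g \mapsto x g x^{-1}$ of $G$ restricts to a group isomorphism of $H$ onto the conjugate subgroup in question, so a one-sided containment between $H$ and a conjugate $x H x^{-1}$ or $x^{-1} H x$ forces equality, and the other one-sided containment follows by swapping the roles of $x$ and $x^{-1}$. Once this is in place, reassembling level by level via Proposition \ref{inc_lvl} returns both directions of the biconditional at the $L$-subgroup level, completing the proof.
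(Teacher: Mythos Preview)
Your overall strategy matches the paper's: descend to level subsets via Proposition~\ref{inc_lvl} and Theorem~\ref{lvl_conj}, reduce the question to a crisp claim about conjugate subgroups of $G$, and then reassemble. The paper phrases the crisp step as ``${\eta_t}^{z^{-1}} \subseteq \eta_t$ implies $z^{-1} \in N(\eta_t)$, and since the normalizer is a subgroup also $z \in N(\eta_t)$'', whereas you phrase it as ``one-sided containment of $H$ in a conjugate forces equality because conjugation is an isomorphism''. These are the same assertion in different clothing.

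The gap is precisely here, and your justification does not work: the map $g \mapsto xgx^{-1}$ is a bijection from $H$ onto $xHx^{-1}$, so $xHx^{-1} \subseteq H$ only yields an injection of $H$ into itself, and for infinite $H$ that need not be surjective. A concrete counterexample is $G = BS(1,2) = \langle a, t \mid tat^{-1} = a^{2} \rangle$ with $H = \langle a \rangle$: here $tHt^{-1} = \langle a^{2} \rangle \subsetneq H$ while $t^{-1}Ht \not\subseteq H$. Taking $L = \{0,1\}$, $\mu = 1_G$, $\eta = 1_H$ and the $L$-point $1_{t^{-1}} \in \mu$ then gives $\eta^{a_x} \subseteq \eta$ with $\eta^{a_{x^{-1}}} \not\subseteq \eta$. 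So the crisp equivalence you rely on is false without an extra hypothesis (finiteness of the level subgroups $\eta_t$, for instance, would repair it). The paper's own proof makes the identical leap in passing from a one-sided containment to ``$z^{-1} \in N(\eta_t)$'', so you are not missing a trick; both arguments share the same lacuna.
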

\begin{proof}
	Suppose that $\eta^{a_z} \subseteq \eta$. Then, $(\eta^{a_z})_t \subseteq \eta_t$ for all $t \leq \text{tip }(\eta^{a_z}) = a \wedge \eta(e)$. By Theorem \ref{lvl_conj}, $(\eta^{a_z})_t = {\eta_t}^{z^{-1}}$ for all $t \leq a \wedge \eta(e)$. Thus
	\[ {\eta_t}^{z^{-1}} \subseteq \eta_t \text{ for all } t \leq a \wedge \eta(e). \]
	This implies $z^{-1} \in N(\eta_t)$ for all $t \leq a \wedge \eta(e)$. Since the normalizer of $\eta_t$ is a subgroup of $G$, $z \in N(\eta_t)$ for all $t \leq a \wedge \eta(e)$. Thus
	\[ {\eta_t}^z \subseteq \eta_t \text{ for all } t \leq a \wedge \eta(e). \]
	Again, by Theorem \ref{lvl_conj}, $(\eta^{a_{z^{-1}}})_t = {\eta_t}^z$ for all $t \leq a \wedge \eta(e) $. Thus $(\eta^{a_{z^{-1}}})_t \subseteq \eta_t$ for all $t \leq a \wedge \eta(e)= \text{tip }(\eta^{a_z})$. Hence by Proposition \ref{inc_lvl}, $\eta^{a_{z^{-1}}} \subseteq \eta$.
	
	\noindent  The converse part can be obtained by replacing $z$ by $z^{-1}$ in the above exposition.	
\end{proof}

\noindent The notion of the normalizer of an $L$-subgroup was introduced by Ajmal and Jahan in \cite{ajmal_nor} using the concept of cosets of $L$-subgroups. In \cite{jahan_conj}, the authors have given a new definition of the normalizer using the conjugate of $L$-subgroups, like their classical counterparts. For the sake of completenes, we have provide a different and exhaustive proof of the new definition in Theorem \ref{def_norm}.

\begin{theorem}\label{def_norm}
	Let $\eta \in L(\mu)$ and let $\lambda$ be the $L$-subset of $G$ defined as follows:
	\[ \lambda = \bigcup \left\{ a_z \in \mu \mid \eta^{a_z} \subseteq \eta \right\}. \]
	Then, $\lambda$ is the largest $L$-subgroup of $\mu$ such that $\eta$ is a normal $L$-subgroup of $\lambda$. Hence $\lambda$ is the normalizer of $\eta$ in $\mu$.
\end{theorem}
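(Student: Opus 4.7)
The plan is to establish four separate facts in turn: (a) $\lambda \in L(\mu)$; (b) $\eta \subseteq \lambda$; (c) $\eta \triangleleft \lambda$; and (d) if $\sigma \in L(\mu)$ satisfies $\eta \triangleleft \sigma$, then $\sigma \subseteq \lambda$. Together (a)--(c) say that $\eta$ is a normal $L$-subgroup of the $L$-subgroup $\lambda$ of $\mu$, while (d) makes $\lambda$ maximal with this property, which by definition identifies $\lambda$ with the normalizer of $\eta$ in $\mu$. Throughout I work pointwise with $\lambda(x) = \bigvee\{a \leq \mu(x) : \eta^{a_x} \subseteq \eta\}$; this set is always non-empty because $a=0$ qualifies trivially.

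For (a), containment $\lambda \subseteq \mu$ is immediate from the definition, and closure under inversion follows from $\mu(x^{-1}) = \mu(x)$ together with Proposition \ref{conj_inv}, which gives $\eta^{a_x} \subseteq \eta \Leftrightarrow \eta^{a_{x^{-1}}} \subseteq \eta$. The heart of the step is closure under products, which reduces to the implication
\[ \eta^{a_x} \subseteq \eta \ \text{and}\ \eta^{b_y} \subseteq \eta \ \Longrightarrow\ \eta^{(a\wedge b)_{xy}} \subseteq \eta. \]
This is a direct computation: for each $w \in G$,
\[ \eta^{(a\wedge b)_{xy}}(w) = (a \wedge b) \wedge \eta\bigl(x(ywy^{-1})x^{-1}\bigr) \leq b \wedge \eta(ywy^{-1}) \leq \eta(w), \]
where the first inequality applies $\eta^{a_x} \subseteq \eta$ to $ywy^{-1}$ and the second applies $\eta^{b_y} \subseteq \eta$ to $w$. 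Combined with $(a \wedge b) \leq \mu(x) \wedge \mu(y) \leq \mu(xy)$ and taking suprema, this yields $\lambda(xy) \geq \lambda(x) \wedge \lambda(y)$. For (b), given $x \in G$ I set $a = \eta(x)$ and verify $\eta^{a_x} \subseteq \eta$ using $\eta(w) = \eta\bigl(x^{-1}(xwx^{-1})x\bigr) \geq \eta(x) \wedge \eta(xwx^{-1})$; together with $a \leq \mu(x)$, this gives $\eta(x) \leq \lambda(x)$.

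For (c), I must show $\eta(yxy^{-1}) \geq \eta(x) \wedge \lambda(y)$. For every witness $b_y \in \mu$ with $\eta^{b_y} \subseteq \eta$, Proposition \ref{conj_inv} also yields $\eta^{b_{y^{-1}}} \subseteq \eta$, that is, $b \wedge \eta(y^{-1} u y) \leq \eta(u)$ for all $u$; substituting $u = yxy^{-1}$ gives $b \wedge \eta(x) \leq \eta(yxy^{-1})$, and taking the supremum over admissible $b$, together with complete distributivity of $L$, delivers the required inequality. For (d), let $\sigma \in L(\mu)$ with $\eta \triangleleft \sigma$ and fix $x \in G$ with $a = \sigma(x)$; since $\sigma \subseteq \mu$ we have $a \leq \mu(x)$. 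For any $w \in G$, the normality of $\eta$ in $\sigma$ applied with conjugator $x^{-1}$ to the element $xwx^{-1}$ gives
\[ \eta(w) = \eta\bigl(x^{-1}(xwx^{-1})x\bigr) \geq \eta(xwx^{-1}) \wedge \sigma(x^{-1}) = a \wedge \eta(xwx^{-1}) = \eta^{a_x}(w), \]
so $\eta^{a_x} \subseteq \eta$ and hence $\sigma(x) \leq \lambda(x)$. The main technical obstacle is the product-closure calculation in (a), where the key trick is the decomposition $xyw y^{-1}x^{-1} = x(ywy^{-1})x^{-1}$ that lets the two hypotheses be applied in sequence; all remaining steps are essentially applications of Proposition \ref{conj_inv} and the definitions.
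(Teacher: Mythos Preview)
Your proof is correct and follows essentially the same approach as the paper's own proof: the four-part structure (a)--(d), the key product-closure computation via the decomposition $(xy)w(xy)^{-1} = x(ywy^{-1})x^{-1}$, the appeal to Proposition~\ref{conj_inv} for inversion and for normality, and the maximality argument are all identical in substance to what the paper does. The only minor point is that the paper explicitly invokes complete distributivity when passing from $\bigvee_{a,b}(a\wedge b)$ to $\lambda(x)\wedge\lambda(y)$ in step~(a), which you leave implicit there (though you correctly cite it in step~(c)).
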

\begin{proof}
	Firstly, note that for all $x \in G$, 
	\[ \lambda(x) = \bigcup \left\{ a_z \in \mu \mid \eta^{a_z} \subseteq \eta \right\}(x) = \bigvee_{\substack{a_x \in \mu \\ \eta^{a_x} \subseteq \eta}} \left\{ a \right\}. \]
	For $x \in G$, define the subset $L_{\eta}(x)$ of $L$ as follows:
	\[ L_{\eta}(x) = \{ a \in L \mid a_x \in \mu \text{ and } \eta^{a_x} \subseteq \eta \}. \]
	Then, $\lambda(x) = \vee \{ a \mid a \in L_{\eta}(x) \}$.
	Now, we show that $\lambda$ is an $L$-subgroup of $\mu$. Clearly, $\lambda \subseteq \mu$, since for all $x \in G$, 
	\[ \lambda(x) = \vee \{ a \mid a_x \in \mu \text{ and } \eta^{a_x} \subseteq \eta \} \leq \vee \{ a \mid a_x \in \mu\} = \mu(x). \]  
	Let $x , y \in G$. We claim that if $a \in L_{\eta}(x)$ and $b \in L_{\eta}(y)$, then $a \wedge b \in L_{\eta}(xy)$. So, let $a \in L_{\eta}(x)$ and $b \in L_{\eta}(y)$. Then, $a_x, b_y \in \mu$ and $\eta^{a_x}, \eta^{b_y} \subseteq \eta$. Then, $(a \wedge b)_{xy} \in \mu$, since
	\[ \mu(xy) \geq \mu(x) \wedge \mu(y) \geq a \wedge b. \]
	Moreover, for all $g \in G$,
	\begin{equation*}
	\begin{split}
		\eta^{(a \wedge b)_{xy}}(g) &= (a \wedge b) \wedge \eta((xy)g(xy)^{-1}) \\
		&= b \wedge (a \wedge \eta(x(ygy^{-1})x^{-1})) \\
		&= b \wedge \eta^{a_x}(ygy^{-1}) \\
		&\leq b \wedge \eta(ygy^{-1}) \qquad\qquad\qquad\qquad\qquad (\text{since $\eta^{a_x} \subseteq \eta$}) \\
		&= \eta^{b_y}(g) \\
		&\leq \eta(g). ~\qquad\qquad\quad\qquad\qquad\qquad\qquad (\text{since $\eta^{b_y} \subseteq \eta$})
	\end{split}
	\end{equation*} 
	Hence $\eta^{(a \wedge b)_{xy}} \subseteq \eta$. Therefore $a \wedge b \in L_{\eta}(xy)$. Thus we have
	\begin{equation*}
	\begin{split}
		\lambda(xy) &= \vee \{ c \mid c \in L_{\eta}(xy) \} \\
		&\geq \{ a \wedge b \mid a \in L_{\eta}(x) \text{ and } b \in L_{\eta}(y) \} \\
		&= (\vee \{a \mid a \in L_{\eta}\}) \wedge (\{ b \mid b \in L_{\eta}(y)\}) \\
		&= \lambda(x) \wedge \lambda(y),
	\end{split}
	\end{equation*}
	since $L$ is a completely distributive lattice. Moreover, it follows readily by Proposition \ref{conj_inv} that $\lambda(x^{-1}) = \lambda(x)$. Thus $\lambda$ is an $L$-subgroup of $\mu$.
	
	\noindent Next, to show that $\eta$ is a $L$-subgroup of $\lambda$, let $x \in G$. Then, $(\eta(x))_x \in \mu$ and for all $g \in G$,
	\begin{equation*}
	\begin{split}
		\eta(g) &= \eta(x^{-1}(xgx^{-1})x) \\
		&\geq \eta(x^{-1}) \wedge \eta(xgx^{-1}) \\
		&= \eta(x) \wedge \eta(xgx^{-1}) \\
		&= \eta^{(\eta(x))_x}(g). 
	\end{split}
	\end{equation*}
	Thus $\eta^{(\eta(x))_x} \subseteq \eta$. Hence $\eta(x) \in L_{\eta}(x)$. Therefore
	\[ \eta(x) \leq \vee \{ a \mid a \in L_{\eta}(x) \} = \lambda(x). \] 
	Now, to show that $\eta$ is a normal $L$-subgroup of $\lambda$, let $x, y \in G$ and let $a \in L_{\eta}(y)$. Then, $\eta^{a_y} \subseteq \eta$. By Proposition \ref{conj_inv}, $\eta^{a_{y^{-1}}} \subseteq \eta$. Thus 
	\begin{equation*}
	\begin{split}
		a \wedge \eta(x) &= a \wedge \eta(y^{-1}(yxy^{-1})y) \\
		&= \eta^{a_{y^{-1}}}(yxy^{-1}) \\
		&\leq \eta(yxy^{-1}).
	\end{split}
	\end{equation*}
	Therefore
	\begin{equation*}
	\begin{split}
		\eta(yxy^{-1}) &\geq \vee \{ \eta(x) \wedge a \mid L_{\eta}(y) \} \\
		&= \eta(x) \wedge \{ \vee \{ a \mid a \in L_{\eta}(y) \}\} \\ 
		&= \eta(x) \wedge \lambda(y).
	\end{split}
	\end{equation*}
	Thus $\eta$ is a normal $L$-subgroup of $\mu$.
	\noindent Finally, we show that $\lambda$ is the largest $L$-subgroup of $\mu$ such that $\eta$ is normal in $\lambda$. To this end, let $\theta$ be an $L$-subgroup of $\mu$ containing $\eta$ such that $\eta$ is a normal $L$-subgroup of $\theta$. Let $x \in G$. Then, $(\theta(x))_x \in \mu$ and for all $g \in G$,
	\begin{equation*}
	\begin{split}
		\eta^{(\theta(x))_x}(g) &= \theta(x) \wedge \eta(xgx^{-1}) \\
		&= \theta(x^{-1}) \wedge \eta(xgx^{-1}) ~~~\qquad\qquad\qquad (\text{since $\theta \in L(\mu)$} )\\
		&\leq \eta(x^{-1}(xgx^{-1})x) \qquad\qquad\qquad\qquad (\text{since $\eta \in NL(\theta)$})\\
		&= \eta(g).
	\end{split}
	\end{equation*}   
	Hence $(\theta(x))_x \in L_{\eta}(x)$. Therefore
	\[ \theta(x) \leq \vee \{ a \mid a \in L_{\eta}(x) \} = \lambda(x). \]
	We conclude that $\theta \subseteq \lambda$.
\end{proof}

\noindent As a corollary to Theorem \ref{def_norm}, we note that an $L$-subgroup $\eta$ of $\mu$ is a normal $L$-subgroup of $\mu$ if and only if $N(\eta) = \mu$. In Theorem \ref{norm_abn}, we show that an abnormal $L$-subgroup of $\mu$ is a self-normalizing $L$-subgroup of $\mu$. These two facts together provide another distinction between the notions of abnormal and normal $L$-subgroups.   

\begin{theorem}
	\label{norm_abn}
	Let $\eta$ be an abnormal $L$-subgroup of $\mu$. Then, $\eta$ is a self-normalizing subgroup of $\mu$, that is, $N(\eta) = \eta$.
\end{theorem}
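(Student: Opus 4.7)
The plan is to show the two containments $\eta \subseteq N(\eta)$ and $N(\eta) \subseteq \eta$. The first is part of Theorem \ref{def_norm}, so the substance lies entirely in proving $N(\eta) \subseteq \eta$.

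For the reverse inclusion I would take an arbitrary $L$-point $a_x \in N(\eta)$ and aim to conclude $\eta(x) \geq a$. My intermediate target would be the auxiliary statement: if $a_x \in N(\eta)$, then $\eta^{a_x} \subseteq \eta$. This is not immediate from Theorem \ref{def_norm}, because $N(\eta)(x)$ is only defined as a supremum of values $b$ for which $\eta^{b_x} \subseteq \eta$, and such a supremum need not be attained in general. The way around is to write $\eta^{a_x}(g) = a \wedge \eta(xgx^{-1})$, bound $a$ by that supremum, and push the meet inside the join using the complete distributivity of $L$:
\[ \eta^{a_x}(g) \leq \Big(\bigvee \{ b \mid \eta^{b_x} \subseteq \eta \}\Big) \wedge \eta(xgx^{-1}) = \bigvee \{ \eta^{b_x}(g) \mid \eta^{b_x}\subseteq \eta\} \leq \eta(g). \]

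Once $\eta^{a_x} \subseteq \eta$ is in hand, we have $\eta \cup \eta^{a_x} = \eta$, and hence $\langle \eta, \eta^{a_x}\rangle = \eta$ because $\eta$ is already an $L$-subgroup. Now I would invoke abnormality: since $N(\eta) \subseteq \mu$, the $L$-point $a_x$ lies in $\mu$, so the hypothesis yields $a_x \in \langle \eta, \eta^{a_x}\rangle = \eta$, i.e.\ $\eta(x) \geq a$. Ranging over all $a$ with $a \leq N(\eta)(x)$ and taking the supremum gives $N(\eta)(x) \leq \eta(x)$, which completes the inclusion and hence the theorem.

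The main technical point, and the only nontrivial step, is the auxiliary implication $a_x \in N(\eta) \Rightarrow \eta^{a_x} \subseteq \eta$; everything after that is essentially formal manipulation from the definition of abnormality. The complete distributivity of $L$ assumed throughout the paper is exactly what makes this implication go through, so I expect the hardest part of writing the proof to be isolating and cleanly presenting that distributivity step.
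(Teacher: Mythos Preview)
Your proposal is correct, and the core mechanism---once $\eta^{a_x}\subseteq\eta$ is known, abnormality gives $a_x\in\langle\eta,\eta^{a_x}\rangle=\eta$---is exactly what the paper uses. However, the paper's proof is shorter because it sidesteps your auxiliary implication entirely. Rather than starting from an arbitrary $L$-point $a_x\in N(\eta)$ and then arguing via distributivity that $\eta^{a_x}\subseteq\eta$, the paper works directly with the defining union $N(\eta)=\bigcup\{a_z\in\mu\mid\eta^{a_z}\subseteq\eta\}$ from Theorem~\ref{def_norm}: each $L$-point $a_z$ appearing in that union \emph{already} satisfies $\eta^{a_z}\subseteq\eta$ by construction, so abnormality yields $a_z\in\eta$, and a union of $L$-points all lying in $\eta$ is contained in $\eta$. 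Thus the step you single out as ``the main technical point'' is not needed at all. Incidentally, even on your route the explicit distributivity computation is avoidable: Theorem~\ref{def_norm} gives $\eta\in NL(N(\eta))$, and Lemma~\ref{nor_conj} (with $N(\eta)$ in the role of $\mu$) then gives $\eta^{a_x}\subseteq\eta$ for every $a_x\in N(\eta)$ directly.
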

\begin{proof}
	Let $\eta$ be an abnormal $L$-subgroup of $\mu$. To show that $N(\eta) \subseteq \eta$, let $a_z \in \mu$ such that $\eta^{a_z} \subseteq \eta$. Then, 
	\[ a_z \in \langle \eta, \eta^{a_z} \rangle = \eta. \]
	It follows from Theorem \ref{def_norm} that
	\[ N(\eta) = \bigcup \left\{ a_z \in \mu \mid \eta^{a_z} \subseteq \eta \right\} \subseteq \eta. \]
	Hence $N(\eta) = \eta$.
\end{proof}

\begin{remark}
	In \cite{jahan_nil}, the authors have proved that nilpotent $L$-subgroups of an $L$-group satisfy the normalizer condition, that is, if $\eta$ is a nilpotent $L$-subgroup of an $L$-group $\mu$, then $\eta$ is a proper $L$-subgroup of its normalizer $N(\eta)$. Hence nilpotent $L$-subgroups cannot be self-normalizing subgroups. It follows from the above theorem that a nilpotent $L$-subgroup cannot be an abnormal $L$-subgroup. Hence the class of nilpotent $L$-subgroups is disjoint from the class of abnormal $L$-subgroups.
\end{remark}

\noindent Here, we provide an example to demonstrate the above theorem.

\begin{example}
	Consider the $L$-subgroup $\eta$ of the $L$-group $\mu$ discussed in Example 1. We determine the normalizer $N(\eta)$ of $\eta$. By Theorem \ref{def_norm}, 
	\[ N(\eta) = \bigcup \left\{ c_z \in \mu \mid \eta^{c_z} \subseteq \eta \right\}, \]
	that is, for $x \in G$,
	\[ N(\eta)(x) = \bigvee_{\substack{c_x \in \mu \\ \eta^{c_x} \subseteq \eta}} \left\{ c \right\}. \]
	Using the above expression, we evaluate $N(\eta)$ for some particular elements. Firstly, consider $(23) \in G$. Let $c_{(23)} \in \mu$. Then, 
	\begin{equation*}
	\begin{split}
		\eta^{c_{(23)}} (x) &= c \wedge \eta((23)x(23)) \\ 
		&= \begin{cases}
			c \wedge u &\text{if } x \in \langle (13) \rangle; \\
			c \wedge a &\text{if } x \in H_1 \setminus \langle (13) \rangle; \\
			c \wedge b &\text{if } x \in H_3 \setminus \langle (13) \rangle;\\
			c \wedge l &\text{otherwise}
		\end{cases}
	\end{split}
	\end{equation*}
	Thus $\eta^{c_{(23)}} \subseteq \eta$ only if $c = a$ or $c = l$. Therefore $N(\eta)(23) = a = \eta(a)$. Similarly, it can be easily seen that $N(\eta)(x) = \eta(x)$ for all $x \in G$. Hence $N(\eta) = \eta$. 
\end{example}

\noindent Finally, we show that every maximal $L$-subgroup of an $L$-group $\mu$ is either a normal or an abnormal $L$-subgroup of $\mu$.

Below, we recall the definition of maximal $L$-subgroups from \cite{jahan_max}:

\begin{definition}(\cite{jahan_max})
	Let $\mu \in L(G)$. A proper $L$-subgroup $\eta$ of $\mu$ is said to be a maximal $L$-subgroup of $\mu$ if, whenever $\eta \subseteq \theta \subseteq \mu$ for some $\theta \in L(\mu)$, then either $\theta = \eta$ or $\theta = \mu$.
\end{definition}

\noindent Below, we recall the notion of jointly supstar $L$-subsets from \cite{ajmal_nil}. It is worthwile to mention here that this notion is a generalization of the notion of sup-property and lends itself easily for applications.

\begin{definition}
	A non-empty subset $X$ of a lattice $L$ is said to be supstar if every non-empty subset $A$ of $X$ contains its supremum, that is, $\mathop{\vee}\limits_{a \in A} \{a\} \in A$.
\end{definition}

\noindent The following result follows immediately from the above definition:

\begin{proposition}
	Let $\eta\in L^{\mu}$. Then, $\eta$ possesses sup-property if and only if $\text{Im}(\eta)$ is a supstar subset of $L$.
\end{proposition}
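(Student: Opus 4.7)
The plan is to prove both implications by a direct translation between subsets of $G$ and subsets of $\text{Im}(\eta)$ via the map $A \mapsto \eta(A)$. The key observation I will exploit is that for any $A \subseteq G$, the join $\bigvee_{x \in A} \eta(x)$ equals $\bigvee \eta(A)$, so the two sup-style conditions are really talking about the same suprema; all that differs is whether we demand the supremum be attained at a point in $G$ or realized as an element of the value set.

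For the forward direction, I will assume $\eta$ possesses the sup-property and take an arbitrary non-empty subset $B \subseteq \text{Im}(\eta)$. I would set $A = \{x \in G \mid \eta(x) \in B\}$ and first observe that $A$ is non-empty (every element of $B$ lies in $\text{Im}(\eta)$ and hence has a preimage in $G$) and that $\eta(A) = B$. Applying the sup-property to $A$ yields $x_0 \in A$ with $\bigvee_{x \in A} \eta(x) = \eta(x_0)$. Since $\bigvee_{x \in A}\eta(x) = \bigvee \eta(A) = \bigvee B$ and $\eta(x_0) \in B$, I conclude $\bigvee B \in B$, which is exactly the supstar condition on $\text{Im}(\eta)$.

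For the converse, I will assume $\text{Im}(\eta)$ is supstar and take a non-empty $A \subseteq G$. The set $B = \eta(A)$ is a non-empty subset of $\text{Im}(\eta)$, so by the supstar hypothesis $\bigvee B \in B$, which means $\bigvee B = \eta(x_0)$ for some $x_0 \in A$. Since $\bigvee_{x \in A}\eta(x) = \bigvee B$, this furnishes the witness $x_0 \in A$ required by the sup-property.

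I do not foresee any genuine obstacle here, as the statement is essentially a reformulation: the sup-property is stated on the domain side and the supstar property on the codomain side of $\eta$, and they transfer through the image map. The only small point to be careful about is verifying non-emptiness of $A$ in the forward direction, which is immediate because $B \subseteq \text{Im}(\eta)$ ensures every $b \in B$ has at least one preimage under $\eta$.
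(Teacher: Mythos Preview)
Your argument is correct: the map $A \mapsto \eta(A)$ between non-empty subsets of $G$ and non-empty subsets of $\text{Im}(\eta)$ does exactly the translation you describe, and both directions go through as written. The paper does not spell out a proof of this proposition at all---it simply notes that the result follows immediately from the definitions---so your write-up is precisely the routine unpacking the authors left to the reader.
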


\begin{definition}
	Let $\{\eta_i:i\in I\}\subseteq L^{\mu}$ be an arbitrary family. Then, $\{\eta_i:i\in I\}$ is said to be a supstar family if $\mathop{\bigcup}\limits_{i\in I}\text{Im} (\eta_i)$ is a supstar subset of $L$. In particular, two $L$-subsetes $\eta, \theta \in L^{\mu}$ are said to be jointly supstar if $\text{Im}(\eta) \cup \text{Im}(\theta)$ is a supstar subset of $L$.
\end{definition}

\begin{lemma}
	\label{conj_sup}
	Let $\eta \in L(\mu)$ such that $\eta$ and $\mu$ are jointly supstar. Let $a_x \in \mu$, where $a \in \text{Im}(\mu)$. Then, 
	\[ \text{Im}(\eta^{a_x}) \subseteq \text{Im}(\eta)\cup\text{Im}(\mu). \]
	Moreover, $\eta \cup \eta^{a_x}$ possesses sup-property.
\end{lemma}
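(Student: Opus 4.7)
The plan is to exploit the key structural fact that any supstar subset $X \subseteq L$ is totally ordered: applying the supstar property to a two-element subset $\{p,q\} \subseteq X$ forces $p \vee q \in \{p,q\}$, so $p$ and $q$ are comparable, and hence their meet $p \wedge q$ equals the smaller one and again lies in $X$. I will apply this observation throughout to the supstar set $X = \text{Im}(\eta) \cup \text{Im}(\mu)$ furnished by the joint supstar hypothesis.

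For the inclusion $\text{Im}(\eta^{a_x}) \subseteq \text{Im}(\eta) \cup \text{Im}(\mu)$, I would fix $y \in G$ and look at $\eta^{a_x}(y) = a \wedge \eta(xyx^{-1})$. By hypothesis $a \in \text{Im}(\mu)$, and plainly $\eta(xyx^{-1}) \in \text{Im}(\eta)$, so both values lie in $X$. By the chain observation above the two values are comparable, so $a \wedge \eta(xyx^{-1})$ is simply whichever of them is smaller; in either case it belongs to $\text{Im}(\eta) \cup \text{Im}(\mu)$, giving the stated inclusion.

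For the sup-property of $\eta \cup \eta^{a_x}$, I would note that $(\eta \cup \eta^{a_x})(y) = \eta(y) \vee \eta^{a_x}(y)$, where both arguments lie in $X$ by the preceding paragraph (and the trivial inclusion $\text{Im}(\eta) \subseteq X$). The chain property applied once more makes them comparable, so this join is the larger of the two and again lies in $X$. Hence $\text{Im}(\eta \cup \eta^{a_x}) \subseteq X$. Since supstar-ness is immediately inherited by non-empty subsets — any subset of a supstar set is still a subset of $L$ and, being contained in $X$, must contain its supremum — the image of $\eta \cup \eta^{a_x}$ is itself supstar. Applying the proposition recalled just before the lemma, which identifies the sup-property with the image being supstar, finishes the argument.

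The only non-routine step is the initial observation that a supstar subset of $L$ is a chain; once this is in hand, both assertions reduce to bookkeeping on images plus one application of the recalled equivalence between sup-property and supstar image. No structural property of $\eta$ as an $L$-subgroup beyond $\eta \subseteq \mu$ is actually required.
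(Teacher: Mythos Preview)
Your argument is correct. The paper states this lemma without proof, so there is no explicit argument to compare against; your route---observing that a supstar subset of $L$ is necessarily a chain, so that binary meets and joins of elements of $X = \text{Im}(\eta)\cup\text{Im}(\mu)$ stay in $X$, and then invoking the equivalence between the sup-property and supstar image---is exactly the natural way to fill the gap and is certainly what the authors had in mind.
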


\begin{theorem}
	Let $\eta \in L(\mu)$ such that $\eta$ and $\mu$ are jointly supstar. Let $\eta$ be a maximal $L$-subgroup of $\mu$. Then, $\eta$ is either a normal or an abnormal $L$-subgroup of $\mu$.
\end{theorem}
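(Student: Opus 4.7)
The plan is to mimic the classical argument that a maximal subgroup is either normal or self-normalizing, with the latter upgraded to abnormal by a maximality argument applied twice. I will prove the dichotomy by assuming $\eta$ is not normal in $\mu$ and deducing that $\eta$ is abnormal. Fix an arbitrary $L$-point $a_x \in \mu$ and set $\theta = \langle \eta, \eta^{a_x}\rangle$. By construction, $\theta \in L(\mu)$ and $\eta \subseteq \theta \subseteq \mu$, so maximality of $\eta$ forces $\theta = \eta$ or $\theta = \mu$.

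If $\theta = \mu$, then $a_x \in \mu = \theta$ and we are done for this $L$-point. The substantive case is $\theta = \eta$. Here $\eta \cup \eta^{a_x} \subseteq \theta = \eta$, so in particular $\eta^{a_x} \subseteq \eta$, which by Theorem \ref{def_norm} yields $a_x \in N(\eta)$. The second use of maximality is the crux: $N(\eta) \in L(\mu)$ with $\eta \subseteq N(\eta) \subseteq \mu$, so $N(\eta) = \eta$ or $N(\eta) = \mu$. By the corollary to Theorem \ref{def_norm} noted in the text, $N(\eta) = \mu$ is equivalent to $\eta \triangleleft \mu$, which we have excluded. Hence $N(\eta) = \eta$, so $a_x \in \eta \subseteq \langle \eta, \eta^{a_x} \rangle$. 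Since $a_x \in \mu$ was arbitrary, $\eta$ is abnormal.

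The minor sanity check I would make along the way is that the dichotomy from maximality is really applicable to $\theta$ and to $N(\eta)$. Both are $L$-subgroups of $\mu$ containing $\eta$, and the definition of maximal $L$-subgroup in \cite{jahan_max} imposes no propriety condition on the intermediate $\theta$, so the conclusion $\theta \in \{\eta, \mu\}$ is immediate from the definition.

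The jointly supstar hypothesis on $\eta$ and $\mu$ does not obviously intervene in the skeleton above; I expect its role is to make the ambient theory of $\langle \eta, \eta^{a_x}\rangle$ behave well level-wise via Lemma \ref{conj_sup} and Theorem \ref{gen_sup}, ensuring, for instance, that the containment $\eta^{a_x} \subseteq \eta$ extracted from $\theta = \eta$ can also be read off at the level of non-empty level subsets if needed. The only place where I anticipate any real subtlety is if one wanted a level-subset proof (in the spirit of Theorem \ref{levsub1}) rather than the direct $L$-point argument sketched above; in that case, jointly supstar would be genuinely required to commute generation with level subsets. With the $L$-point approach, however, I do not foresee any obstacle, and the entire argument is essentially two applications of maximality paired with Lemma \ref{nor_conj}/Theorem \ref{def_norm}.
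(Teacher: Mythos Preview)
Your argument is correct, and indeed the jointly supstar hypothesis plays no role in it. Interestingly, the paper proves this particular theorem by a quite different and much longer route: it assumes $\eta$ is neither normal nor abnormal, extracts witnesses $a_x$ (with $\eta^{a_x}\nsubseteq\eta$) and $b_y$ (with $b_y\notin\langle\eta,\eta^{b_y}\rangle$), sets $c=\mu(x)$, $d=\mu(y)$, and then runs a four-case analysis comparing $c$ and $d$, in each case constructing an intermediate $L$-subgroup strictly between $\eta$ and $\mu$ to contradict maximality. The supstar hypothesis is used there to control suprema in $\text{Im}(\eta)\cup\text{Im}(\mu)$ and, via Lemma~\ref{conj_sup} and Theorem~\ref{gen_sup}, to commute generation with level subsets.

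However, immediately after this theorem the paper states and proves the same conclusion \emph{without} the supstar assumption, and that second proof is essentially your argument: apply maximality to $N(\eta)$ to get either $N(\eta)=\mu$ (normal) or $N(\eta)=\eta$, and in the latter case, for each $a_x\in\mu$, either $\eta^{a_x}\nsubseteq\eta$ forces $\langle\eta,\eta^{a_x}\rangle=\mu$, or $\eta^{a_x}\subseteq\eta$ gives $a_x\in N(\eta)=\eta$. So you have effectively rediscovered the paper's cleaner second proof, which supersedes the first and shows the supstar hypothesis is superfluous. One cosmetic suggestion: since $N(\eta)$ is independent of $a_x$, it reads more naturally to dispose of the dichotomy $N(\eta)\in\{\eta,\mu\}$ once at the outset (as the paper's second proof does) rather than inside the loop over $a_x$.
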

\begin{proof}
	Firstly, since $\eta$ is a maximal $L$-subgroup of $\mu$, it is a proper $L$-subgroup of $\mu$. Hence by Theorem \ref{abn_nor}, $\eta$ cannot be both a normal and an abnormal $L$-subgroup of $\mu$. Now, suppose on the contrary, that $\eta$ is neither a normal nor an abnormal $L$-subgroup of $\mu$. Then, by Lemma \ref{nor_conj}, there exists an $L$-point $a_x \in \mu$ such that $\eta^{a_x} \nsubseteq \eta$. Also, it follows from the definition of abnormality that there exists an $L$-point $b_y \in \mu$ such that $b_y \notin \langle \eta, \eta^{b_y} \rangle$. Here, we note that $a_x \notin \eta$, for if $\eta(x) \geq a$, then for all $g \in G$,
	\begin{equation*}
		\begin{split}
			\eta(g) &= \eta(x^{-1}(xgx^{-1})x) \\
			&\geq\eta(x) \wedge \eta(xgx^{-1}) \qquad\qquad\qquad~~ (\text{since } \eta \in L(\mu))  \\
			&\geq a \wedge \eta(xgx^{-1}) \qquad\qquad\qquad\qquad (\text{since } \eta(x) \geq a) \\
			&= \eta^{a_x}(g),
		\end{split}
	\end{equation*}
	which contradicts our assumption that $\eta^{a_x} \nsubseteq \eta$. Also, it is clear that $b_y \notin \eta$. Let $c = \mu(x)$ and $d = \mu(y)$. Then, since $a_x \notin \eta$, $c_x \notin \eta$. Similarly, $d_y \notin \eta$. Moreover, since $\{ c, d \} \subseteq \text{Im}(\mu)$,
	\[ c \vee d = c \text{ or } d. \]
	Thus we have the following cases:
	\begin{description}
		\item[Case 1: $\mathbf{c<d}$.]	Define an $L$-subset $\theta$ of $G$ as follows:
		\[ \theta(g) = \{ \eta(g) \vee c \} \wedge \mu(g) \qquad\qquad \text{for all } g \in G. \]
		Note that for all $g \in G$, $\theta(g) \leq \mu(g)$. Hence $\theta \in L^{\mu}$. Now, we show that $\theta \in L(\mu)$. To this end, let $g_1, g_2 \in G$. Then,
		\begin{equation*}
			\begin{split}
				\theta(g_1g_2) &= \{ \eta(g_1g_2) \vee c \} \wedge \mu(g_1g_2) \\
				&\geq \{ \{ \eta(g_1) \wedge \eta(g_2) \} \vee c \} \wedge \{ \mu(g_1) \wedge \mu(g_2) \} \\
				&= \{\{\eta(g_1) \vee c\} \wedge \{\eta(g_2) \vee c\}\} \wedge \{\mu(g_1) \wedge \mu(g_2)\} \\
				&= \{\{ \eta(g_1) \vee c \} \wedge \mu(g_1)\} \wedge \{{\eta(g_2) \vee c } \wedge \mu(g_2) \} \\
				&=  \theta(g_1) \wedge \theta(g_2).
			\end{split}
		\end{equation*}
		Also, for all $g \in G$,
		\begin{equation*}
			\begin{split}
				\theta(g^{-1}) &= \{ \eta(g^{-1}) \vee c \} \wedge \mu(g^{-1}) \\
				&= \{ \eta(g) \vee c \} \wedge \mu(g) \qquad\qquad\qquad (\text{since } \eta, \mu \in L(G)) \\
				&= \theta(g).
			\end{split}
		\end{equation*}
		Hence $\theta \in L(\mu)$. Moreover, note that 
		\[ \eta(g) \leq \eta(g) \vee c \qquad \text{and} \qquad \eta(g) \leq \mu(g) \]
		for all $g \in G$. Hence
		\[ \eta(g) \leq \{ \eta(g) \vee c \} \wedge \mu(g) = \theta(g) \leq \mu(g) \]
		for all $g \in G$. Hence $\eta \subseteq \theta \subseteq \mu$. Now, since $\eta$  and $\mu$ are jointly supstar and $\{ \eta(x), c \} \subseteq \text{Im}(\mu) \cup \text{Im}(\eta)$, $\eta(x) \vee c = \eta(x) \text{ or } c$. Moreover, $c_x \notin \eta$ implies $\eta(x) < c$. Thus $\eta(x) \vee c = c$. This implies
		\[ \eta(x) < c = \{ \eta(x) \vee c \} \wedge \mu(x) = \theta(x). \]
		Hence $\eta \subsetneq \theta$. Next, since $\eta$  and $\mu$ are jointly supstar and $\{ \eta(y), d \} \subseteq \text{Im}(\mu) \cup \text{Im}(\eta)$, $\eta(y) \vee d = \eta(y) \text{ or } d$. Moreover, $d_y \notin \eta$ implies $\eta(y) < d$. Also, by assuption, $c < d$. Thus $\eta(y) \vee c < d$. This implies
		\[ \theta(y) = \{ \eta(y) \vee c \} \wedge \mu(y) < d = \mu(y). \]
		Thus $\theta \subsetneq \mu$. Hence we have found an $L$-subgroup $\theta$ of $\mu$ such that $\eta \subsetneq \theta \subsetneq \mu$, which contradicts the maximality of $\eta$ in $\mu$. 
		
		\item[Case 2: $\mathbf{d < c}$.] Define an $L$-subset $\theta$ of $G$ as follows:
		\[ \theta(g) = \{ \eta(g) \vee d \} \wedge \mu(g) \qquad\qquad \text{for all } g \in G. \]
		Then, it can be proven similar to case 1 that $\theta \in L(\mu)$ and $\eta \subseteq \theta \subseteq \mu$.
		Now, since $\eta$  and $\mu$ are jointly supstar and $\{ \eta(y), d \} \subseteq \text{Im}(\mu) \cup \text{Im}(\eta)$, $\eta(y) \vee d = \eta(y) \text{ or } d$. Moreover, $d_y \notin \eta$ implies $\eta(y) < d$. Thus $\eta(y) \vee d = d$. This implies
		\[ \eta(y) < d = \{ \eta(y) \vee d \} \wedge \mu(y) = \theta(y). \]
		Hence $\eta \subsetneq \theta$. Next, since $\eta$  and $\mu$ are jointly supstar and $\{ \eta(x), c \} \subseteq \text{Im}(\mu) \cup \text{Im}(\eta)$, $\eta(x) \vee c = \eta(x) \text{ or } c$. Moreover, $c_x \notin \eta$ implies $\eta(x) < c$. Also, by assuption, $d < c$. Thus $\eta(x) \vee d < c$. This implies
		\[ \theta(x) = \{ \eta(x) \vee d \} \wedge \mu(x) < c = \mu(x). \]
		Thus $\theta \subsetneq \mu$. Hence we have found an $L$-subgroup $\theta$ of $\mu$ such that $\eta \subsetneq \theta \subsetneq \mu$, which contradicts the maximality of $\eta$ in $\mu$. 
		
		\item[Case 3: $\mathbf{c=d}$ and $\mathbf{x \in \langle \eta_c, {\eta_c}^{y^{-1}} \rangle}$.] Consider the $L$-subgroup $\langle \eta, \eta^{c_y} \rangle$ of $\mu$. Note that 
		\[ \eta \subseteq \langle \eta, \eta^{c_y} \rangle \subseteq \mu. \]
		Moreover, since $\eta$ and $\mu$ are jointly supstar and $c \in \text{Im}(\mu)$, by Lemma \ref{conj_sup}, $\eta \cup \eta^{c_y}$ possesses the sup-property. Hence by Theorems \ref{gen_sup} and \ref{lvl_conj}, 
		\[ \langle \eta, \eta^{c_y} \rangle_c = \langle (\eta \cup \eta^{c_y} )_c \rangle = \langle \eta_c, {\eta_c}^{y^{-1}} \rangle. \]
		Hence $x \in  \langle \eta, \eta^{c_y} \rangle_c$, that is $c_x \in  \langle \eta, \eta^{c_y} \rangle$. Since $c_x \notin \eta$,
		\[ \eta \subsetneq \langle \eta, \eta^{c_y} \rangle \subseteq \mu. \]
		Therefore by maximality of $\eta$ in $\mu$, $\langle \eta, \eta^{c_y} \rangle = \mu$. Thus $c_y \in \langle \eta, \eta^{c_y} \rangle$. This implies 
		\[ y \in  \langle \eta, \eta^{c_y} \rangle_c = \langle \eta_c, {\eta_c}^{y^{-1}} \rangle. \]
		Now, $b \leq c$ implies $\eta_c \subseteq \eta_b$ and ${\eta_c}^{y^{-1}} \subseteq {\eta_b}^{y^{-1}}$. Thus
		\[  y \in \langle \eta, \eta^{c_y} \rangle_c = \langle (\eta \cup \eta^{c_y} )_c \rangle = \langle \eta_c, {\eta_c}^{y^{-1}} \rangle \subseteq \langle \eta_b, {\eta_b}^{y^{-1}} \rangle \subseteq \langle (\eta \cup \eta^{b_y})_b \rangle \subseteq \langle \eta, \eta^{b_y} \rangle_b, \]
		which contradicts the assumption that $b_y \notin \langle \eta, \eta^{b_y} \rangle$.
		
		\item[Case 4: $\mathbf{c=d}$ and $\mathbf{x \notin \langle \eta_c, {\eta_c}^{y^{-1}} \rangle}$.] Consider the $L$-subgroup $\langle \eta, \eta^{c_y} \rangle$ of $\mu$. Again, since $\eta$ and $\mu$ are jointly supstar and $c \in \text{Im}(\mu)$, by Lemma \ref{conj_sup}, $\eta \cup \eta^{c_y}$ possesses the sup-property. Hence by Theorems \ref{gen_sup} and \ref{lvl_conj}, 
		\[ \langle \eta, \eta^{c_y} \rangle_c = \langle (\eta \cup \eta^{c_y} )_c \rangle = \langle \eta_c, {\eta_c}^{y^{-1}} \rangle. \]
		Hence $x \notin  \langle \eta, \eta^{c_y} \rangle_c$, that is $c_x \notin  \langle \eta, \eta^{c_y} \rangle$. Hence
		\[ \eta \subseteq \langle \eta, \eta^{c_y} \rangle \subsetneq \mu. \]
		By maximality of $\eta$ in $\mu$, we must have $\eta = \langle \eta, \eta^{c_y} \rangle$. Thus $\eta^{c_y} \subseteq \eta$. Now, let $N(\eta)$ be the normalizer of $\eta$ in $\mu$. Then, $\eta \subseteq N(\eta) \subseteq \mu$. Moreover, since $\eta^{c_y} \subseteq \eta$, by Theorem \ref{def_norm}, $c_y \in N(\eta)$. Hence $\eta \subsetneq N(\eta)$. Moreover, $\eta^{a_x} \subsetneq \eta$ implies $a_x \notin N(\eta)$. Hence $N(\eta) \subsetneq \mu$. Thus
		\[ \eta \subsetneq N(\eta) \subsetneq \mu, \]
		which contradicts the maximality of $\eta$ in $\mu$.  
	\end{description}
	Thus we arrive at a contradiction in all of these exhaustive cases. Therefore we conclude that $\eta$ is either a normal or an abnormal $L$-subgroup of $\mu$. 
\end{proof}

\begin{theorem}
	Let $\eta$ be a maximal $L$-subgroup of $\mu$. Then, $\eta$ is either a normal or an abnormal $L$-subgroup of $\mu$.
\end{theorem}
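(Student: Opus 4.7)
The plan is to completely sidestep the jointly supstar hypothesis needed in the previous theorem by exploiting the normalizer characterization of normality. The key observation is that maximality of $\eta$ forces the normalizer $N(\eta)$ in $\mu$ to collapse to either $\eta$ itself or to all of $\mu$, which conveniently matches the dichotomy we want to prove.

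First I would invoke the corollary to Theorem \ref{def_norm}: $\eta \in L(\mu)$ is a normal $L$-subgroup of $\mu$ if and only if $N(\eta) = \mu$. If $\eta$ is normal in $\mu$, there is nothing to do. Otherwise, since $N(\eta)$ is an $L$-subgroup of $\mu$ containing $\eta$ (Theorem \ref{def_norm}) and $N(\eta) \neq \mu$, the maximality of $\eta$ forces $N(\eta) = \eta$. This single reduction is what makes the general statement go through.

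With this in hand, it remains to verify abnormality. Let $a_x \in \mu$ be an arbitrary $L$-point, and split into two cases according to whether $\eta^{a_x} \subseteq \eta$. If $\eta^{a_x} \not\subseteq \eta$, then $\eta \subsetneq \langle \eta, \eta^{a_x} \rangle$; moreover, Remark \ref{conj_tip} ensures $\eta^{a_x} \in L(\mu)$, so $\langle \eta, \eta^{a_x} \rangle \in L(\mu)$ as well and thus $\langle \eta, \eta^{a_x} \rangle \subseteq \mu$. Maximality then forces $\langle \eta, \eta^{a_x} \rangle = \mu$, which certainly contains $a_x$. If instead $\eta^{a_x} \subseteq \eta$, then the explicit description of $N(\eta)$ in Theorem \ref{def_norm} gives $a_x \in N(\eta)$; but $N(\eta) = \eta \subseteq \langle \eta, \eta^{a_x} \rangle$, so again $a_x \in \langle \eta, \eta^{a_x} \rangle$.

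I do not anticipate any substantial obstacle. The only minor check is the strict inclusion $\eta \subsetneq \langle \eta, \eta^{a_x} \rangle$ in the first case, which is immediate from the fact that $\eta^{a_x}$ is contained in the generated $L$-subgroup while, by assumption, it is not contained in $\eta$. Everything else amounts to direct applications of results already stated in the paper, and the argument conspicuously avoids any appeal to the jointly supstar hypothesis that was essential in the preceding theorem.
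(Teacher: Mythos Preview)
Your proposal is correct and follows essentially the same argument as the paper: use maximality on the chain $\eta \subseteq N(\eta) \subseteq \mu$ to reduce to the self-normalizing case, and then split on whether $\eta^{a_x} \subseteq \eta$, invoking maximality again in the first subcase and the description of $N(\eta)$ from Theorem~\ref{def_norm} in the second. The only cosmetic difference is that you phrase the dichotomy as ``normal versus not normal'' while the paper phrases it as ``$N(\eta)=\mu$ versus $N(\eta)=\eta$'', but the logic is identical.
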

\begin{proof}
	Let $\eta$ be a maximal $L$-subgroup of $\mu$ and let $N(\eta)$ denote the normalizer of $\eta$ in $\mu$. Then,
	\[ \eta \subseteq N(\eta) \subseteq \mu. \]
	By maximality of $\eta$, either $N(\eta) = \mu$ or $N(\eta) = \eta$. If $N(\eta) = \mu$, then $\eta$ is a normal $L$-subgroup of $\mu$. On the other hand, suppose that $N(\eta) = \eta$. We claim that $\eta$ is an abnormal $L$-subgroup of $\mu$. To this end, let $a_x \in \mu$. Then, we have the following cases:
	\begin{description}
		\item[Case 1: $\mathbf{\eta^{a_x} \nsubseteq \eta.}$]	Then, $\eta \subsetneq \langle \eta, \eta^{a_x} \rangle \subseteq \mu$. By maximality of $\eta$, $\langle \eta, \eta^{a_x} \rangle = \mu$. Thus $a_x \in \langle \eta, \eta^{a_x} \rangle$.
		\item[Case 2: $\mathbf{\eta^{a_x} \subseteq \eta}$.] Then, by Theorem \ref{def_norm}, $a_x \in N(\eta)$. Since $N(\eta) = \eta$, $a_x \in \langle \eta, \eta^{a_x} \rangle$.
	\end{description}
	\noindent Hence in both the cases, $a_x \in \langle \eta, \eta^{a_x} \rangle$. Thus $\eta$ is an abnormal $L$-subgroup of $\mu$.
\end{proof}

\noindent The following example illustrates the above theorem:

\begin{example}
	Let $M=\{ l,f,a,b,c,d,u \}$ be the lattice given by figure 1. Let $\mathbf{2}$ be the chain $0<1$. Then, 
	\begin{align*}
		M \times \mathbf{2} = &\{ (l,0), (f,0), (a,0), (b,0), (c,0), (d,0), (u,0),\\
		&~ (l,1), (f,1), (a,1), (b,1), (c,1), (d,1), (u,1) \}. 
	\end{align*} 
	\noindent Let $G=S_4$, the group of all permutations of the set $\{1,2,3,4\}$ with identity element $\epsilon$. Let
	\[ D_4^1 = \langle (24), (1234) \rangle, D_4^2 = \langle (12), (1324) \rangle, D_4^3 = \langle (23), (1342) \rangle \]
	denote the dihedral subgroups of $G$ and $V_4 = \{\epsilon, (12)(34), (13)(24), (14)(23)\}$ denote the Klein-4 subgroup of $G$. 
	\begin{center}
		\includegraphics[scale=1.0]{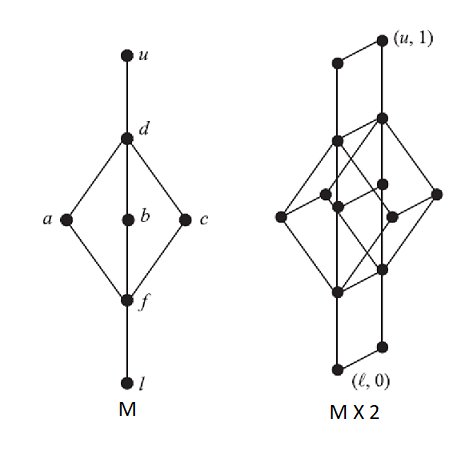}\\
		\centering \text{Figure 1}		
	\end{center}
	Define the $L$-subset $\mu$ of $G$ as follows:
	\[ \mu(x) = \begin{cases}
		(u,1) &\text{if } x = e,\\
		(d,1) &\text{if } x \in V_4 \setminus \{e\}, \\
		(d,0) &\text{if } x \in S_4 \setminus V_4.
	\end{cases} \]
	Since each non-empty level subset $\mu_t$ is a subgroup of $G$, by Theorem \ref{lev_gp}, $\mu$ is an $L$-subgroup of $G$.
	
	\noindent Now, define $\eta \in L^{\mu}$ as follows:
	\[ \eta(x) = \begin{cases}
		(u,1) &\text{if } x=e,\\
		(d,1) &\text{if } x \in V_4 \setminus \{e\},\\
		(a,0) &\text{if } x \in D_4^1 \setminus V_4,\\
		(b,0) &\text{if } x \in D_4^2 \setminus V_4,\\
		(c,0) &\text{if } x \in D_4^3 \setminus V_4,\\
		(f,0) &\text{if } x \in S_4 \setminus \mathop{\cup}\limits_{i=1}^3 D_4^i.
	\end{cases} \]
	Since every non-empty level subset $\eta_t$ is a subgroup of $\mu_t$, by Theorem \ref{lev_sgp}, $\eta \in L(\mu)$. Moreover, it can be verified that $\eta$ is a maximal $L$-subgroup of $\mu$. To this end, suppose that there exists an $L$-subgroup $\theta \in L(\mu)$ such that $\eta \subsetneq \theta \subseteq \mu$. Then, there exists $x \in G$ such that $\eta(x) < \theta(x) \leq \mu(x)$. Since 
	\[ \eta_{(u,1)} = \mu_{(u,1)} \qquad \text{and} \qquad \eta_{(d,1)} = \mu_{(d,1)}, \]
	we must have $x \in S_4 \setminus V_4$. Now, suppose that $x \in D_4^1 \setminus V_4$. Then, 
	$\eta(x) < \theta(x) \leq \mu(x)$ implies $\theta(x) = (d,0)$. But then, $D_4^1, D_4^2 \subseteq \theta_{(b,0)}$ implies $\theta_{(b,0)} = S_4$. Similarly, $\theta_{(c,0)} = S_4$. Thus $\theta_{(d,0)} = S_4$, which implies $\theta = \mu$. Moreover, it can be verified in a similar manner for all other $x$ that $\theta = \mu$. Hence $\eta$ is a maximal $L$-subgroup of $\mu$.
	
	\noindent Next, note that the pairs $(\eta_t, \mu_t)$ for $t = (a,1), (b,1) \text{ and } (c,1)$ are 
	\[ (\eta_{(a,1)}, \mu_{(a,1)}) = (D_4^1, S_4), (\eta_{(b,1)}, \mu_{(b,1)}) = (D_4^2, S_4), (\eta_{(c,1)}, \mu_{(c,1)}) = (D_4^3, S_4) \]
	Since none of the $D_4^i$ is a normal subgroup of $S_4$, by Theorem \ref{lev_norsgp}, $\eta$ is not a normal $L$-subgroup of $\mu$.  
		
	\noindent Finally, it can be shown easily that $\eta$ is an abnormal $L$-subgroup of $\mu$. For instance, consider the $L$-point ${(d,0)}_{(123)} \in \mu$. Then, by definition, 
	\[ \eta^{{(d,0)}_{(123)}}(x) = (d,0) \wedge \eta((123)x(132)) \]
	for all $x \in S_4$. Hence
	\[	\eta^{{(d,0)}_{(123)}}(x) = \begin{cases}
		(d,0) \wedge (u,1) &\text{if } x=e,\\
		(d,0) \wedge (d,1) &\text{if } x \in V_4 \setminus \{e\},\\
		(d,0) \wedge (b,0) &\text{if } x \in D_4^1 \setminus V_4,\\
		(d,0) \wedge (c,0) &\text{if } x \in D_4^2 \setminus V_4,\\
		(d,0) \wedge (a,0) &\text{if } x \in D_4^3 \setminus V_4,\\
		(d,0) \wedge (f,0) &\text{if } x \in S_4 \setminus \mathop{\cup}\limits_{i=1}^3 D_4^i.
	\end{cases} \]
	Thus
	\[\eta^{{(d,0)}_{(123)}}(x) = \begin{cases}
		(d,0) &\text{if } x \in V_4,\\
		(b,0) &\text{if } x \in D_4^1 \setminus V_4,\\
		(c,0) &\text{if } x \in D_4^2 \setminus V_4,\\
		(a,0) &\text{if } x \in D_4^3 \setminus V_4,\\
		(f,0) &\text{if } x \in S_4 \setminus \mathop{\cup}\limits_{i=1}^3 D_4^i.
	\end{cases} \] 
	Hence it is clear that ${(d,0)}_{(123)} \in \langle \eta, \eta^{{(d,0)}_{(123)}} \rangle$.
\end{example}

\section{Contranormal $L$-subgroups of an $L$-group}

\noindent In this section, we introduce the notion of a contranormal $L$-subgroup of an $L$-group and explore its various interactions with normality similar to abnormal $L$-subgroups. This provides a justification to the notion that contranormal $L$-subgroups serve as an opposite to normal $L$-subgroups.

Below, we define the contranormal $L$-subgroups of an $L$-group.

\begin{definition}
	Let $\eta$ be an $L$-subgroup of $\mu$. Then, $\eta$ is said to be a contranormal $L$-subgroup of $\mu$ if there does not exist any proper $L$-subgroup of $\mu$ containing every conjugate $\eta^{a_z}$ of $\eta$ in $\mu$.
\end{definition}

\noindent In Theorem \ref{con_sub}, we discuss the subnormality of contranormal $L$-subgroups of an $L$-group. This result illustrates the compability of these two notions in the context of $L$-group theory. Morover, using this result, we can formulate an alternate definition of contranormal $L$-subgroups, which we discuss in Definition \ref{def_contra}.

Here, we recall the definition of normal closure and subnormality from \cite{ajmal_nc}:.

\begin{definition}(\cite{ajmal_nc})
	Let $\eta \in L(\mu)$. The $L$-subset $\mu\eta\mu^{-1}$ of $\mu$ defined by 
	\[ \mu\eta\mu^{-1}(x) = \bigvee_{x=zyz^{-1}} \left\{ \eta(y) \wedge \mu(z) \right\} \qquad \text{ for each } x \in G \]
	is called the conjugate of $\eta$ in $\mu$.	The normal closure of $\eta$ in $\mu$, denoted by $\eta^\mu$, is defined to be the $L$-subgroup of $\mu$ generated by the conjugate $\mu\eta\mu^{-1}$, that is,
	\[ \eta^\mu = \langle \mu\eta\mu^{-1} \rangle. \]
	Moreover, $\eta^\mu$ is the smallest normal $L$-subgroup of $\mu$ containing $\eta$.
\end{definition}

\begin{remark}
	The tip of the normal closure $\eta^\mu$ of $\eta$ is same as $\text{tip}(\eta)$, that is, $\eta^\mu (e) = \eta(e)$. 
\end{remark}

\begin{definition}(\cite{ajmal_nc})
	Let $\eta \in L(\mu)$. Define a descending series of $L$-subgroups of $\mu$ inductively as follows:
	\[ \eta_0 = \mu \qquad \text{ and } \qquad \eta_i = \eta^{\eta_{i-1}} \qquad \text{ for all } i \geq 1. \]
	Then, $\eta_i$ is the smallest normal $L$-subgroup of $\eta_{i-1}$ containing $\eta$, called the $i^{th}$ normal closure of $\eta$ in $\mu$. The series of $L$-subgroups
	\[ \mu = \eta_0 \supseteq \eta_1 \supseteq \ldots \supseteq \eta_{i-1} \supseteq \eta_i \supseteq \ldots \]
	is called the normal closure series of $\eta$ in $\mu$. Moreover, if there exists a non-negative integer $m$ such that 
	\[ \eta = \eta_m \vartriangleleft \eta_{m-1} \vartriangleleft \ldots \vartriangleleft \eta_0 = \mu, \]
	then $\eta$ is said to be a subnormal $L$-subgroup of $\mu$ with defect $m$. 
	
	\noindent Clearly, $m=0$ if $\eta = \mu$ and $m=1$ if $\eta \in NL(\mu)$ and $\eta \neq \mu$.
\end{definition}

\begin{theorem}
	\label{con_sub}
	Let $\eta$ be a contranormal $L$-subgroup of $\mu$. Then, $\eta$ is a normal $L$-subgroup of $\mu$ if and only if $\eta = \mu$. Moreover, if $\eta$ is a proper $L$-subgroup of $\mu$, then $\eta$ is not a subnormal $L$-subgroup of $\mu$. Hence $\eta$ is either a subnormal $L$-subgroup of $\mu$ with defect $0$ or $\eta$ is not a subnormal $L$-subgroup of $\mu$.
\end{theorem}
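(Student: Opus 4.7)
The plan is to handle the biconditional first and then the subnormality statement by contradiction, leveraging Lemma \ref{nor_conj} throughout together with the monotonicity of conjugation, $\eta \subseteq \theta \Rightarrow \eta^{a_z} \subseteq \theta^{a_z}$, which follows immediately from the formula $\eta^{a_z}(x) = a \wedge \eta(zxz^{-1})$.

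For the biconditional, the reverse direction is trivial since $\mu \in NL(\mu)$. Conversely, if $\eta \in NL(\mu)$, Lemma \ref{nor_conj} yields $\eta^{a_z} \subseteq \eta$ for every $L$-point $a_z \in \mu$, so $\eta$ is itself an $L$-subgroup of $\mu$ containing every conjugate of $\eta$. The contranormality hypothesis then forbids $\eta$ from being a proper $L$-subgroup of $\mu$, forcing $\eta = \mu$.

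For the subnormality part, I would argue by contradiction. Suppose $\eta$ is proper, contranormal, and subnormal with defect $m$. Since $\eta \neq \mu$ we must have $m \geq 1$. Consider the first normal closure $\eta_1 = \eta^{\mu}$, which lies in $NL(\mu)$ and contains $\eta$. Using the monotonicity of conjugation together with Lemma \ref{nor_conj} applied to $\eta_1$, one obtains $\eta^{a_z} \subseteq \eta_1^{a_z} \subseteq \eta_1$ for every $a_z \in \mu$. Thus $\eta_1$ is an $L$-subgroup of $\mu$ containing every conjugate of $\eta$, and contranormality forces $\eta_1 = \mu$. A one-line induction then gives $\eta_i = \eta^{\eta_{i-1}} = \eta^{\mu} = \mu$ for every $i \geq 0$, contradicting $\eta_m = \eta \neq \mu$. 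The closing dichotomy is then immediate: a contranormal $\eta$ can only be subnormal if its defect is $0$, i.e., $\eta = \mu$; otherwise $\eta$ is not subnormal at all.

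The one delicate point is concluding $\eta = \mu$ (respectively $\eta_1 = \mu$) from the statement that $\eta$ (respectively $\eta_1$) is ``not a proper $L$-subgroup of $\mu$,'' because the paper's definition of \emph{proper} requires simultaneously non-constant and unequal to $\mu$. The degenerate case in which the enveloping $L$-subgroup is constant must be ruled out separately; this can be done using $\text{tip}(\eta^{\mu}) = \text{tip}(\eta)$ to exhibit a genuinely non-constant proper $L$-subgroup of $\mu$ still containing every conjugate of $\eta$, contradicting contranormality. Once this mild technicality is dispatched, the rest of the proof is a clean application of Lemma \ref{nor_conj} and the inductive definition of the normal closure series.
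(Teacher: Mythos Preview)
Your proof is correct and follows the same overall arc as the paper: use Lemma \ref{nor_conj} for the biconditional, then show $\eta^{\mu}=\mu$ so that the entire normal closure series collapses to $\mu$, precluding subnormality for proper $\eta$. Where you differ is in how you reach $\eta^{\mu}=\mu$. You argue abstractly via monotonicity of conjugation together with Lemma \ref{nor_conj} applied to the normal $L$-subgroup $\eta_1=\eta^{\mu}$ itself, obtaining $\eta^{a_z}\subseteq(\eta^{\mu})^{a_z}\subseteq\eta^{\mu}$. The paper instead computes directly that $\mu\eta\mu^{-1}(x)=\left(\bigcup_{a_z\in\mu}\eta^{a_z}\right)(x)$, thereby establishing the two-sided equivalence ``$\eta^{a_z}\subseteq\theta$ for all $a_z\in\mu$ if and only if $\eta^{\mu}\subseteq\theta$''. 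The paper's explicit computation buys more than the theorem requires: it yields the full characterization ``$\eta$ is contranormal if and only if $\eta^{\mu}=\mu$'', which the paper immediately records as an alternative definition (Definition \ref{def_contra}). Your shortcut gives only the forward implication, which suffices for Theorem \ref{con_sub} but not for justifying the equivalent definition that follows. On the other hand, you are more careful than the paper in flagging the ``constant versus proper'' technicality arising from the paper's definition of a proper $L$-subgroup; the paper's proof passes over this point silently.
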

\begin{proof}
	Firstly,  suppose that $\eta$ is a normal $L$-subgroup of $\mu$. Then, since $\eta$ is normal in $\mu$, by Lemma \ref{nor_conj}, $\eta^{a_z} \subseteq \eta$ for all $a_z \in \mu$. However, since $\eta$ is a contranormal $L$-subgroup of $\mu$, there does not exist any proper $L$-subgroup of $\mu$ containing every conjugate of $\eta$. It follows that $\eta = \mu$. 
	
	Next, suppose that $\eta$ is a proper $L$-subgroup of $\mu$. We claim that $\eta$ is a contranormal $L$-subgroup of $\mu$ if and only if $\eta^{\mu} = \mu$. For this, we show that if for $\theta \in L(\mu)$, $\eta^{a_z} \subseteq \theta$ for all $a_z \in \mu$ if and only if $\eta^\mu \subseteq \theta$.
 	
 	Suppose that  $\eta^{a_z} \subseteq \theta$ for all $a_z \in \mu$. Let $x \in G$. Then,
	\begin{equation*}
	\begin{split}
		\mu\eta\mu^{-1} (x) &= \bigvee_{\substack{x=zyz^{-1} \\ y,z \in G}} \left\{ \eta(y) \wedge \mu(z) \right\} \\
		&= \bigvee_{z \in G} \left\{ \eta(z^{-1}xz) \wedge \mu(z) \right\} \\
		&= \bigvee_{\substack{a \leq \mu(z) \\ z \in G}} \left\{ \eta(z^{-1}xz) \wedge a \right\} \\
		&= \left\{ \bigcup_{a_z \in \mu} \eta^{a_z} \right\} (x).
	\end{split}
	\end{equation*}
	By assumption, $\eta^{a_z} \subseteq \theta$ fo all $a_z \in \mu$. Hence $\mathop{\bigcup}\limits_{a_z \in \mu} \eta^{a_z} \subseteq \theta$. Hence $\eta^\mu = \langle \mu \eta \mu^{-1} \rangle \subseteq \theta$. 
	
	\noindent Conversely, suppose that $\eta^\mu \subseteq \eta$. Let $a_z \in \mu$. Then, $\mu(z^{-1}) \geq a$. Therefore for all $x \in G$,
	\begin{equation*}
	\begin{split}
		\eta^{a_z}(x) &= a \wedge \eta(zxz^{-1}) \\
		&\leq \mu(z^{-1}) \wedge \eta(zxz^{-1}) \\
		&\leq \bigvee_{\substack{x = gyg^{-1} \\ g,y \in G}} \left\{ \eta(y) \wedge \mu(g) \right\} \\
		&= \mu\eta\mu^{-1}(x).
	\end{split}
	\end{equation*}
	Since $\eta^\mu \subseteq \theta$, $\mu\eta\mu^{-1} \subseteq \theta$. This implies $\eta^{a_z} \subseteq \theta$ for all $a_z \in \mu$.
	
	From the above claim, it follows that $\eta$ is a contranormal $L$-subgroup of $\mu$ if and only if $\eta^\mu = \mu$. Now, let $\eta_i$ denote the $i^{th}$ normal closure of $\eta$ in $\mu$, where
	\[ \eta_0 = \mu \qquad \text{ and } \qquad \eta_i = \eta^{\eta_{i-1}} \qquad \text{ for all } i \geq 1. \]
	It follows that 
	\[ \eta_0 = \mu, \eta_1 = \eta^{\eta_0} = \eta^\mu, \eta_2 = \eta^{\eta_1} = \eta^{\mu} = \mu, \ldots, \eta_i = \eta^{\eta_{i-1}} = \eta^{\mu} = \mu, \ldots \]
	Hence $\eta_m \neq \eta$ for any $m \geq 0$. It follows that $\eta$ is not a subnormal $L$-subgroup of $\mu$.
	
\end{proof}

\noindent In view of the above theorem, we have the following equivalent definition of contranormal $L$-subgroups:

\begin{definition}
	\label{def_contra}
	Let $\eta \in L(\mu)$. Then, $\eta$ is said to be a contranormal $L$-subgroup of $\mu$ if the normal closure $\eta^\mu$ of $\eta$ in $\mu$ is the whole $L$-group $\mu$.
\end{definition}

\noindent In Theorem \ref{lev_contra}, we develop a level subset characterization for contranormal $L$-subgroups. 

\begin{lemma}
	\label{nc_sup}
	Let $\eta \in L(\mu)$ such that $\eta$ and $\mu$ are jointly supstar. Then, 
	\[ \text{Im}(\mu\eta\mu^{-1}) \subseteq \text{Im}(\eta)\cup\text{Im}(\mu). \]
	Moreover, if $L$ is a chain, then $\mu\eta\mu^{-1}$ possesses sup-property.
\end{lemma}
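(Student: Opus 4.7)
The plan is to express $\mu\eta\mu^{-1}$ as a pointwise supremum of conjugate $L$-subgroups of $\eta$ and then transfer image-control from Lemma \ref{conj_sup} to the whole union. First I would fix $x \in G$ and, in the defining supremum, substitute $y = z^{-1}xz$, then use $\mu(z^{-1}) = \mu(z)$ to reindex by $w = z^{-1}$, recognising each summand as $\eta^{\mu(w)_w}(x) = \mu(w) \wedge \eta(wxw^{-1})$ and obtaining
\[ \mu\eta\mu^{-1}(x) = \bigvee_{w \in G} \eta^{\mu(w)_w}(x). \]

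Next I would apply Lemma \ref{conj_sup}: since $\mu(w) \in \text{Im}(\mu)$ and $\mu(w)_w \in \mu$ for every $w \in G$, the lemma yields $\text{Im}(\eta^{\mu(w)_w}) \subseteq \text{Im}(\eta) \cup \text{Im}(\mu)$. Consequently, the set $\{\eta^{\mu(w)_w}(x) : w \in G\}$ is a non-empty subset of $\text{Im}(\eta) \cup \text{Im}(\mu)$, and since this union is supstar by the joint supstar hypothesis, its supremum lies in the set itself. Hence $\mu\eta\mu^{-1}(x) \in \text{Im}(\eta) \cup \text{Im}(\mu)$, and letting $x$ range over $G$ delivers the first inclusion $\text{Im}(\mu\eta\mu^{-1}) \subseteq \text{Im}(\eta) \cup \text{Im}(\mu)$.

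For the sup-property, under the added chain hypothesis I would proceed directly from the first part: for any non-empty $A \subseteq G$, the set $\{\mu\eta\mu^{-1}(x) : x \in A\}$ sits inside $\text{Im}(\mu\eta\mu^{-1}) \subseteq \text{Im}(\eta) \cup \text{Im}(\mu)$, which is supstar, so its supremum is attained at $\mu\eta\mu^{-1}(x_0)$ for some $x_0 \in A$, which is exactly the sup-property for $\mu\eta\mu^{-1}$. The main obstacle I anticipate is the initial reindexing step: I will need to track carefully that the double supremum over pairs $(y,z)$ with $x = zyz^{-1}$ collapses onto a supremum of conjugates indexed by $L$-points $\mu(w)_w$ of $\mu$, using both $\mu(z^{-1}) = \mu(z)$ and the formula for $\eta^{a_w}$. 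Once this identification is cleanly in place, both claims become mechanical consequences of supstar-ness combined with Lemma \ref{conj_sup}.
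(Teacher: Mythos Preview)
The paper states Lemma~\ref{nc_sup} without proof, so there is no argument in the paper to compare against. Your proposal is correct: the reindexing $w=z^{-1}$ combined with $\mu(z^{-1})=\mu(z)$ gives the identity $\mu\eta\mu^{-1}(x)=\bigvee_{w\in G}\eta^{\mu(w)_w}(x)$, and from there Lemma~\ref{conj_sup} together with the supstar property of $\text{Im}(\eta)\cup\text{Im}(\mu)$ yields both conclusions exactly as you describe.

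One observation worth making explicit: your argument for the sup-property does not actually invoke the chain hypothesis on $L$. Once the first part establishes $\text{Im}(\mu\eta\mu^{-1})\subseteq\text{Im}(\eta)\cup\text{Im}(\mu)$, the supstar property of the right-hand side alone forces every non-empty subset of $\{\mu\eta\mu^{-1}(x):x\in A\}$ to contain its supremum, giving the sup-property directly. So your proof in fact yields a slightly stronger statement than the one printed (the ``$L$ a chain'' clause is redundant given the jointly supstar hypothesis, since a supstar subset of $L$ is automatically totally ordered). You might flag this when writing up, or simply note that the chain hypothesis is not used.
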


\begin{theorem}
	\label{lev_contra}
	Let $L$ be a chain and $\mu \in L(G)$. Let $\eta \in L(\mu)$ such that $\eta$ and $\mu$ are jointly supstar. If $\eta$ is a contranormal $L$-subgroup of $\mu$, then $\eta_t$ is a contranormal subgroup of $\mu_t$ for all $t \leq \text{tip}(\eta)$. Conversely, let $\mu \in L(G)$ and $\eta \in L(\mu)$ such that $\text{tip}(\eta) = \text{tip}(\mu)$. If $\eta_t$ is a contranormal $L$-subgroup of $\mu_t$ for all $t \leq \text{tip}(\eta)$, then $\eta$ is a contranormal $L$-subgroup of $\mu$. 
\end{theorem}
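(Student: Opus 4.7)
The plan is to prove both directions by reducing the contranormality condition $\eta^{\mu}=\mu$ (Definition \ref{def_contra}) to a statement about level subsets of the conjugate $L$-subset $\mu\eta\mu^{-1}$, and then to recognize those level subsets as the classical conjugate $\mu_t \eta_t \mu_t^{-1}$. The bridge between the $L$-setting and the classical setting is Theorem \ref{gen_sup}, which allows us to pass between $\langle \mu\eta\mu^{-1} \rangle_t$ and $\langle (\mu\eta\mu^{-1})_t \rangle$; the sup-property required by Theorem \ref{gen_sup} is provided in the forward direction by Lemma \ref{nc_sup} (which is exactly why we need $L$ to be a chain and $\eta,\mu$ jointly supstar there).

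For the forward direction, I would first observe that $\eta^{\mu}=\mu$, so $\langle \mu\eta\mu^{-1}\rangle=\mu$. Fix $t\leq\text{tip}(\eta)=a_0$; note that $\text{tip}(\mu\eta\mu^{-1})=a_0$ (attained at $z=y=e$) and $\text{tip}(\mu)\geq a_0$. By Lemma \ref{nc_sup}, $\mu\eta\mu^{-1}$ possesses the sup-property, so Theorem \ref{gen_sup} gives $\langle(\mu\eta\mu^{-1})_t\rangle=\langle\mu\eta\mu^{-1}\rangle_t=\mu_t$. Now I would identify $(\mu\eta\mu^{-1})_t=\mu_t\eta_t\mu_t^{-1}$: the inclusion $\supseteq$ is immediate from the definition of $\mu\eta\mu^{-1}$, and the inclusion $\subseteq$ uses the sup-property of $\mu\eta\mu^{-1}$ to realize the supremum in $\mu\eta\mu^{-1}(x)\geq t$ as an actual factorization $x=zyz^{-1}$ with $\eta(y)\geq t$ and $\mu(z)\geq t$. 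Hence $\langle \mu_t\eta_t\mu_t^{-1}\rangle=\mu_t$, i.e., $\eta_t$ is contranormal in $\mu_t$.

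For the converse, assume $\text{tip}(\eta)=\text{tip}(\mu)=a_0$ and that $\langle \mu_t\eta_t\mu_t^{-1}\rangle=\mu_t$ for every $t\leq a_0$. It is enough to show $\mu\subseteq \eta^{\mu}$, since the reverse inclusion holds by definition. Take any $x\in G$ and set $t=\mu(x)$; then $t\leq a_0$ and $x\in\mu_t$. By hypothesis, $x\in\langle \mu_t\eta_t\mu_t^{-1}\rangle$. For any $w=zyz^{-1}$ with $z\in\mu_t$ and $y\in\eta_t$, one has $\mu\eta\mu^{-1}(w)\geq \eta(y)\wedge\mu(z)\geq t$, so $\mu_t\eta_t\mu_t^{-1}\subseteq(\mu\eta\mu^{-1})_t$, and therefore
\[
x\in\langle \mu_t\eta_t\mu_t^{-1}\rangle\subseteq\langle(\mu\eta\mu^{-1})_t\rangle\subseteq\langle\mu\eta\mu^{-1}\rangle_t=(\eta^{\mu})_t,
\]
where the last inclusion uses Theorem \ref{gen_sup} (its weaker half, which does not require sup-property). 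Hence $\eta^{\mu}(x)\geq t=\mu(x)$, and since $\eta^{\mu}\subseteq\mu$, we conclude $\eta^{\mu}=\mu$.

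The only delicate step I expect is the equality $(\mu\eta\mu^{-1})_t=\mu_t\eta_t\mu_t^{-1}$ in the forward direction, which relies essentially on the sup-property of $\mu\eta\mu^{-1}$ supplied by Lemma \ref{nc_sup}; without it one only gets the inclusion $\supseteq$, which is precisely all that is needed (and used) in the converse. This asymmetry explains why the chain/jointly-supstar hypothesis is required only for the forward direction while a bare tip condition suffices for the converse.
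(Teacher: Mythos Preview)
Your overall strategy is sound and matches the paper's approach closely in the forward direction: both you and the paper reduce $\eta^{\mu}=\mu$ to level subsets via Theorem~\ref{gen_sup} and Lemma~\ref{nc_sup}, and both identify $(\mu\eta\mu^{-1})_t$ with the classical conjugate set. There is, however, one slip in your justification of the inclusion $(\mu\eta\mu^{-1})_t\subseteq \mu_t\eta_t\mu_t^{-1}$. You appeal to the \emph{sup-property of $\mu\eta\mu^{-1}$}, but that property concerns suprema of the form $\bigvee_{x\in A}\mu\eta\mu^{-1}(x)$ over subsets $A\subseteq G$; it says nothing about whether the internal supremum $\bigvee_{x=zyz^{-1}}\{\eta(y)\wedge\mu(z)\}$ defining a single value $\mu\eta\mu^{-1}(x)$ is attained. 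The correct justification---and this is what the paper does---uses directly that $L$ is a chain (so each $\eta(y)\wedge\mu(z)$ lies in $\text{Im}(\eta)\cup\text{Im}(\mu)$) together with the jointly supstar hypothesis on $\eta,\mu$ to conclude that the set $\{\eta(y)\wedge\mu(z):zyz^{-1}=x\}$ contains its own supremum. This is an easy fix using hypotheses you already have, not a structural gap.

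For the converse you take a genuinely different route from the paper. The paper works with the original definition of contranormality: it picks an arbitrary $\theta\in L(\mu)$ containing every conjugate $\eta^{a_z}$, and shows $\theta_t=\mu_t$ for all $t$ by invoking Theorem~\ref{lvl_conj} to recognize $(\eta^{t_{x^{-1}}})_t={\eta_t}^{x}$ and then using classical contranormality of $\eta_t$ in $\mu_t$. You instead use the normal-closure formulation (Definition~\ref{def_contra}) and show $\eta^{\mu}(x)\geq\mu(x)$ directly via the easy inclusion $\mu_t\eta_t\mu_t^{-1}\subseteq(\mu\eta\mu^{-1})_t$ and the sup-property-free half of Theorem~\ref{gen_sup}. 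Your argument is arguably cleaner here, since it stays entirely within the normal-closure language and avoids passing through Theorem~\ref{lvl_conj}; the paper's version, on the other hand, makes the link with the individual conjugates $\eta^{a_z}$ more explicit. Both are correct.
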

\begin{proof}
	($\Rightarrow$) Let $t \leq \text{tip}(\eta)$. To show that $\eta_t$ is a contranormal subgroup of $\mu_t$, we claim that $\{\eta^\mu\}_t = {\eta_t}^{\mu_t}$, where ${\eta_t}^{\mu_t}$ denotes the normal closure of $\eta_t$ in $\mu_t$.
	
	Firstly, note that $\eta^\mu$ is a normal $L$-subgroup of $\mu$ containing $\eta$. Hence by Theorem \ref{lev_norsgp}, $\{\eta^\mu\}_t$ is a normal subgroup of $\mu_t$ containing $\eta_t$. By the definition of normal closure, ${\eta_t}^{\mu_t}$ is the smallest normal subgroup of $\mu_t$ containing $\eta_t$. Hence ${\eta_t}^{\mu_t} \subseteq \{ \eta^\mu \}_t$. For the reverse inclusion, let $x \in \{ \mu\eta\mu^{-1} \}_t$. Then, $\mu\eta\mu^{-1}(x) \geq t$, that is,
	\[ \bigvee_{\substack{x = gyg^{-1} \\ g,y \in G}} \left\{ \eta(y) \wedge \mu(g) \right\} \geq t. \]
	Let $A = \{ \eta(y) \wedge \mu(g) \mid y,g \in G \text{ and } gyg^{-1} = x \}.$ Then, $A$ is a non-empty subset of $\text{Im}(\mu)\cup\text{Im}(\eta)$. Since $\eta$ and $\mu$ are jointly supstar, $A$ contains its supremum, that is, there exist $y_1, g_1 \in G$ such that
	\[ \bigvee_{\substack{x = gyg^{-1} \\ g,y \in G}} \left\{ \eta(y) \wedge \mu(g) \right\} = \eta(y_1) \wedge \mu(g_1). \]
	Moreover, $\eta(y_1) \wedge \mu(g_1) \geq t$, that is, $y_1 \in \eta_t$ and $g_1 \in \mu_t$. Thus 
	\[ x = g_1y_1{g_1}^{-1} \in {\eta_t}^{\mu_t}. \]
	Hence $\{\mu\eta\mu^{-1}\}_t \subseteq {\eta_t}^{\mu_t}$. This implies $\langle \{\mu\eta\mu^{-1}\}_t \rangle \subseteq {\eta_t}^{\mu_t} $. Now, by Lemma \ref{nc_sup}, $\mu\eta\mu^{-1}$ possesses sup-property. Hence by Theorem \ref{gen_sup}, 
	\[ \{\eta^\mu\}_t = \langle \mu\eta\mu^{-1} \rangle_t = \langle \{ \mu\eta\mu^{-1}\}_t \rangle \subseteq {\eta_t}^{\mu_t}. \]
	Thus $\{\eta^\mu\}_t = {\eta_t}^{\mu_t}$. This proves the claim.
	
	Now, since $\eta$ is a contranormal $L$-subgroup of $\mu$, $\eta^\mu = \mu$. Hence $\{\eta^\mu\}_t = \mu_t$. It follows from the above claim that ${\eta_t}^{\mu_t} = \mu_t$. Thus $\eta_t$ is a contranormal subgroup of $\mu_t$.
	
	($\Leftarrow$) To show that $\eta$ is a contranormal $L$-subgroup of $\mu$, suppose that there exists an $L$-subgroup $\theta$ of $\mu$ such that $\eta^{a_z} \subseteq \theta$ for all $a_z \in \mu$. Note that $\text{tip}(\eta) = \text{tip}(\theta) = \text{tip}(\mu)$. We show that $\theta_t = \mu_t$ for all $t \leq \theta(e)$.
	
	\noindent Let $t \leq \theta(e)$. Since $\theta \subseteq \mu$, $\theta_t \subseteq \mu_t$. For the reverse inclusion, we claim that every conjugate of $\eta_t$ in $\mu_t$ is contained in $\theta_t$. To this end, let $x \in \mu_t$. Then, $t_{x^{-1}} \in \mu$. By assumption, $\eta^{t_{x^{-1}}} \subseteq \theta$. Thus $(\eta^{t_{x^{-1}}})_t \subseteq \theta_t$. By Theorem \ref{lvl_conj}, 
	\[ (\eta^{t_{x^{-1}}})_t = {\eta_t}^x. \]
	Therefore ${\eta_t}^x \subseteq \theta_t$, which proves the claim. Since $\eta_t$ is a contranormal subgroup of $\mu_t$, we must have $\theta_t = \mu_t$. Thus $\theta = \mu$. 
\end{proof}

\noindent The following result is immediate. We state it here without proof.

\begin{theorem}
	Let $H$ and $K$ be subgroups of $G$. Then, $H$ is a contranormal subgroup of $K$ if and only if $1_H$ is contranormal $L$-subgroup of $1_K$.
\end{theorem}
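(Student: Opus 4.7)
The plan is to use Definition \ref{def_contra}, which characterizes contranormality via normal closure: $\eta$ is contranormal in $\mu$ if and only if $\eta^{\mu} = \mu$. With this, the task reduces to identifying the normal closure $1_H^{1_K}$ with the characteristic function of the classical normal closure $H^K = \langle\bigcup_{z \in K} zHz^{-1}\rangle$ of $H$ in $K$, since then $1_H^{1_K} = 1_K$ iff $H^K = K$, which is precisely the equivalence sought.

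First, I would compute the conjugate $1_K\,1_H\,1_K^{-1}$ directly from its definition. Since both factors take values in $\{0,1\}$, the expression
\[ 1_K\,1_H\,1_K^{-1}(x) = \bigvee_{x=zyz^{-1}}\{1_H(y)\wedge 1_K(z)\} \]
evaluates to $1$ exactly when $x = zyz^{-1}$ for some $y \in H$ and $z \in K$, and to $0$ otherwise. Hence $1_K\,1_H\,1_K^{-1} = 1_A$, where $A = \bigcup_{z \in K} zHz^{-1}$.

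Next, I would verify the general identity $\langle 1_B \rangle = 1_{\langle B\rangle}$ for any non-empty subset $B$ of $G$ via Theorem \ref{gen}. The non-empty level subsets of $1_B$ are exactly $B$ (for $0 < a \leq 1$) and $G$ (for $a=0$), so in the formula $\hat{1_B}(x) = \bigvee_{a \leq 1}\{a \mid x \in \langle (1_B)_a\rangle\}$ the supremum collapses to $1$ if $x \in \langle B\rangle$ and to $0$ otherwise. Applying this with $B = A$,
\[ 1_H^{1_K} \;=\; \langle 1_K\,1_H\,1_K^{-1}\rangle \;=\; \langle 1_A\rangle \;=\; 1_{\langle A\rangle} \;=\; 1_{H^K}. \]

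Combining these pieces with Definition \ref{def_contra} gives the biconditional in one stroke: $1_H$ is a contranormal $L$-subgroup of $1_K$ iff $1_H^{1_K} = 1_K$ iff $1_{H^K} = 1_K$ iff $H^K = K$, i.e., iff $H$ is contranormal in $K$. The argument is essentially computational; the only point deserving care is the identity $\langle 1_B\rangle = 1_{\langle B\rangle}$, which is immediate from Theorem \ref{gen} once one observes what the level subsets of a characteristic function look like. No significant obstacle is expected.
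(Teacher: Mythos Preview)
Your argument is correct: the computation $1_K\,1_H\,1_K^{-1}=1_A$ with $A=\bigcup_{z\in K}zHz^{-1}$ and the identity $\langle 1_B\rangle=1_{\langle B\rangle}$ (which indeed follows at once from Theorem~\ref{gen}) give $1_H^{1_K}=1_{H^K}$, and then Definition~\ref{def_contra} finishes the biconditional. The paper itself states this theorem without proof (``We state it here without proof''), so there is no published argument to compare against; your computational route via the normal closure is exactly the natural one and supplies the missing details cleanly.
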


\begin{theorem}
	Let $\eta$ be a maximal $L$-subgroup of $\mu$. Then, $\eta$ is either a normal or a contranormal $L$-subgroup of $\mu$.
\end{theorem}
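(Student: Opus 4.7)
The plan is to mimic the slick argument used earlier for abnormal $L$-subgroups (where the normalizer $N(\eta)$ was sandwiched between $\eta$ and $\mu$), but to use the normal closure $\eta^{\mu}$ in place of the normalizer. Concretely, I would consider the chain
\[
\eta \subseteq \eta^{\mu} \subseteq \mu,
\]
where the first inclusion holds because $\eta^{\mu}$, by definition, is the smallest normal $L$-subgroup of $\mu$ containing $\eta$ (and therefore in particular contains $\eta$), and the second because $\eta^{\mu}$ is by construction an $L$-subgroup of $\mu$.

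Since $\eta$ is a maximal $L$-subgroup of $\mu$ and $\eta^{\mu}\in L(\mu)$, the defining property of maximality forces one of the two alternatives $\eta^{\mu}=\eta$ or $\eta^{\mu}=\mu$. In the first case, $\eta$ coincides with its own normal closure in $\mu$; but $\eta^{\mu}$ is a normal $L$-subgroup of $\mu$, so this immediately gives $\eta\triangleleft\mu$, i.e.\ $\eta$ is a normal $L$-subgroup of $\mu$. In the second case, $\eta^{\mu}=\mu$ is precisely the equivalent formulation of contranormality recorded in Definition~\ref{def_contra}, so $\eta$ is a contranormal $L$-subgroup of $\mu$.

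There is essentially no technical obstacle here, provided the earlier infrastructure is in place: the existence and normality of $\eta^{\mu}$ inside $\mu$ (from the recalled definition of normal closure), its minimality among normal $L$-subgroups of $\mu$ containing $\eta$, and the equivalent characterisation of contranormality via $\eta^{\mu}=\mu$ from Theorem~\ref{con_sub}. The only point that deserves a brief sentence in the write-up is why the maximality hypothesis is applicable, namely that $\eta^{\mu}$ is genuinely an $L$-subgroup of $\mu$ sitting between $\eta$ and $\mu$; once this is observed, the dichotomy is forced and no case analysis on $L$-points or supstar hypotheses (as was needed in the abnormal analogue) is required.
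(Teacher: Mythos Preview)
Your proposal is correct and follows essentially the same argument as the paper's own proof: sandwich the normal closure $\eta^{\mu}$ between $\eta$ and $\mu$, apply maximality to force $\eta^{\mu}=\eta$ or $\eta^{\mu}=\mu$, and read off normality or contranormality accordingly. The paper's write-up is slightly terser but the logical content is identical.
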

\begin{proof}
	Let $\eta$ be a maximal $L$-subgroup of $\mu$ and let $\eta^\mu$ be the normal closure of $\eta$ in $\mu$. Then,
	\[ \eta \subseteq \eta^\mu \subseteq \mu. \]
	By maximality of $\eta$, either $\eta^\mu = \mu$ or $\eta^\mu = \eta$. If $\eta^\mu = \eta$, then $\eta$ is a normal $L$-subgroup of $\mu$. On the other hand, if $\eta^\mu = \mu$, then by Definition \ref{def_contra}, $\eta$ is a contranormal $L$-subgroup of $\mu$. Hence the result. 
\end{proof}

\noindent Below, we show that an abnormal $L$-subgroup of $\mu$ is a contranormal $L$-subgroup of $\mu$.

\begin{theorem}
	Let $\eta$ be an abnormal $L$-subgroup of $\mu$. Then, $\eta$ is a contranormal $L$-subgroup of $\mu$.
\end{theorem}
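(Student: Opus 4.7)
The plan is to use the alternative characterization of contranormality that was extracted in the proof of Theorem \ref{con_sub}: namely, an $L$-subgroup $\eta$ of $\mu$ is contranormal in $\mu$ if and only if every $L$-subgroup $\theta$ of $\mu$ that satisfies $\eta^{a_z} \subseteq \theta$ for all $a_z \in \mu$ must coincide with $\mu$. So, I would start by fixing such a $\theta$ and aim to prove $\theta = \mu$.

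The key observation is that $\eta$ itself is one of the conjugates under consideration. Indeed, taking $a = \text{tip}(\mu)$ and $z=e$, we have $a_e \in \mu$, and since $a \geq \text{tip}(\eta)$, the definition gives $\eta^{a_e}(x) = a \wedge \eta(x) = \eta(x)$, so $\eta^{a_e} = \eta$. Consequently $\eta \subseteq \theta$. Now, for an arbitrary $x \in G$, set $a = \mu(x)$ so that $a_x \in \mu$. By the abnormality of $\eta$, $a_x \in \langle \eta, \eta^{a_x} \rangle$. Since both generators lie inside $\theta$ (the first by the previous step, the second by hypothesis on $\theta$), we get $\langle \eta, \eta^{a_x} \rangle \subseteq \theta$, hence $a_x \in \theta$, i.e., $\theta(x) \geq \mu(x)$. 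As $\theta \subseteq \mu$ is built in, this forces $\theta = \mu$.

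Equivalently, one can phrase the same argument directly in terms of the normal closure: since $\eta^{a_z} \subseteq \mu\eta\mu^{-1} \subseteq \eta^{\mu}$ for every $a_z \in \mu$ (as shown in the proof of Theorem \ref{con_sub}), taking $\theta = \eta^{\mu}$ and running the abnormality argument above shows $\eta^{\mu} = \mu$, which by Definition \ref{def_contra} is exactly the contranormality of $\eta$ in $\mu$.

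No real obstacle arises here: the proof is essentially a one-line consequence of the abnormality condition once one observes that $\eta$ is captured as the trivial conjugate $\eta^{\text{tip}(\mu)_e}$ and that every conjugate of $\eta$ is absorbed by the normal closure $\eta^{\mu}$. The only step that required any care is verifying $\eta \subseteq \theta$, which is why the choice $a = \text{tip}(\mu)$, $z=e$ in the conjugate is essential.
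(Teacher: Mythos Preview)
Your proposal is correct and follows essentially the same approach as the paper's own proof: fix an $L$-subgroup $\theta$ containing every conjugate $\eta^{a_z}$, observe that $\eta = \eta^{{a_0}_e}$ with $a_0 = \text{tip}(\mu)$ so that $\eta \subseteq \theta$, and then for each $x \in G$ apply abnormality at the $L$-point $\mu(x)_x$ to deduce $\theta(x) \geq \mu(x)$. The additional rephrasing via $\theta = \eta^{\mu}$ is a harmless variant of the same argument.
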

\begin{proof}
	Let $\eta$ be an abnormal $L$-subgroup of $\mu$. Then, for all $a_z \in \mu$, $a_z \in \langle \eta, \eta^{a_z} \rangle$. To show that $\eta$ is a contranormal $L$-subgroup of $\mu$, let $\theta \in L(\mu)$ such that $\eta^{a_z} \subseteq \theta$ for all $a_z \in \mu$. We show that $\theta = \mu$.
	
	\noindent Firstly, note that $\eta \subseteq \theta$, since $\eta = \eta^{{a_0}_e} \subseteq \theta$, where $a_0 = \text{tip}(\mu)$. Now, let $x \in G$ and $a = \mu(x)$. Then, $a_x \in \mu$. By abnormality of $\eta$ in $\mu$, $a_x \in \langle \eta, \eta^{a_x} \rangle$. By assumption, $\eta^{a_x} \subseteq \theta$. Moreover, $\eta \subseteq \theta$. Hence $\langle \eta, \eta^{a_x} \rangle \subseteq \theta$. This implies $a_x \in \theta$, that is, $\theta(x) \geq a =\mu(x)$. Thus $\theta(x) = \mu(x)$ and we conclude that $\theta = \mu$. 
\end{proof}

\begin{example}
	Consider the $L$-subgroup $\eta$ of the $L$-group $\mu$ discussed in Example 1. We show that $\eta$ is a contranormal $L$-subgroup of $\mu$. Suppose that there exists an $L$-subgroup $\theta$ of $\mu$ such that $\eta^{a_x} \subseteq \theta$ for all $a_x \in \mu$. We show that $\theta = \mu$. For instance, consider $(123) \in S_4$. By assumption, 
	\[ a = u \wedge a = u \wedge \eta((123)) = \eta^{u_\epsilon}((123)) \leq \theta((123)). \]
	Also, 
	\[ b = d \wedge b = d \wedge \eta((124)) = \eta^{d_{(34)}}((123)) \leq \theta((123)). \] 
	Thus $\theta((123)) \geq d = a \vee b$. Since $\theta \in L(\mu)$, it follows that $\theta((123)) = \mu((123)) = d$. Similarly, it can be verified that for all $x \in G$, $\theta(x) = \mu(x)$, that is, $\theta = \mu$. It follows that $\eta$ is a contranormal $L$-subgroup of $\mu$.
	
\end{example}

\section*{Acknowledgements}
The second author of this paper was supported by the Senior Research Fellowship jointly funded by CSIR and UGC, India during the course of development of this paper.


\begin{thebibliography}{9}
	
	\bibitem{ajmal_sup}
	N. Ajmal,
	\textit{Fuzzy groups with sup property,}
	Inform. Sci., 93 (1996), 247-264.
	
	
	\bibitem{ajmal_gen}
	N. Ajmal, I. Jahan,
	\textit{Generated $L$-subgroup of an $L$-group,} Iranian Journal of Fuzzy Systems, 12(2) (2015), 129-136.
	
	\bibitem{ajmal_char}
	N. Ajmal, I. Jahan, \textit {A study of normal fuzzy subgroups and characteristic fuzzy subgroups of a fuzzy group,} Fuzzy Information and Engineering, 2 (2012) 123–143.
	
	
	\bibitem{ajmal_nc}
	N. Ajmal, I. Jahan,
	\textit{Normal Closure of an $L$-subgroup of an $L$-group,}
	The Journal of Fuzzy Mathematics, 22 (2014), 115-126.
	
	\bibitem{ajmal_nil}
	N. Ajmal, I. Jahan,
	\textit{Nilpotency and theory of $L$-Subgroups of an $L$-group,}
	Fuzzy Information and Engineering, 6 (2014), 1-17.
	
	\bibitem{ajmal_nor}
	N. Ajmal, I. Jahan,
	\textit{An $L$-point characterization of normality and normalizer of an $L$-subgroup of an $L$-group,}
	Fuzzy Information and Engineering,  6 (2014), 147-166.
	
	\bibitem{ajmal_sol}
	N. Ajmal, I. Jahan,
	\textit{Solvable $L$-subgroup of an $L$-group},
	Iranian Journal of Fuzzy Systems,  12 (2015), 151--161.
	
	\bibitem{carter_nilpotent}
	R.W. Carter
	\textit{Nilpotent Self-Normalizing Subgroups of Soluble Groups}, 
	Math. Zeitschr., 75 (1961), 136-139.
	
	\bibitem{dixit_level}
	V.N. Dixit, R. Kumar, N. Ajmal,
	\textit{Level subgroups and union of fuzzy subgroups},
	Fuzzy Sets and Systems,
	Volume 37, Issue 3,
	1990,
	Pages 359-371,
	
	\bibitem{fattahi_groups}
	A. Fattahi, 
	\textit{Groups with only normal and abnormal subgroups}, 
	Journal of Algebra, 28 (1974), 15-19.
	
	\bibitem{goguen_sets}
	J. A. Goguen, 
	\textit{$L$-fuzzy sets,} 
	J. Math. Anal. Appl., 18 (1967), 145-174.
	
	\bibitem{gratzer_lattices}
	G. Gratzer, \textit{General lattice theory,} Academic Press, New York, 1978.
	
	\bibitem{jahan_conj}
	I. Jahan, A. Manas,
	\textit{Conjugate $L$-Subgroups of an $L$-group and their Applications to Normality and Normalizer}, Fuzzy Information and Engineering, 14 (2022), 488-508.
	
	\bibitem{jahan_max}
	I. Jahan, A. Manas, 
	\textit{Maximal and Frattini $L$-subgroups of an $L$-group}, 
	Journal of Intelligent and Fuzzy Systems, Vol. 39, no.3, 2020, pp. 3995-4007.
	
	\bibitem{jahan_app}
	I. Jahan, A. Manas,
	\textit{An Application of Maximality to Nilpotent and Finitely Generated  $L$-Subgroups of an $L$-Group},
	Accepted for publication in International Journal of Fuzzy Logic and Intelligent Systems.
	
	\bibitem{jahan_nil}
	I, Jahan, B. Davvaz, N. Ajmal,
	\textit{Nilpotent $L$-subgroups Satisfy the Normalizer Condition},
	Journal of Intelligent and Fuzzy Systems, 33 (2017), 1841–1854
	
	\bibitem{kurdachenko_abnormal}
	L.A. Kurdachenko, J. Otal, I. Ya. Subbotin,
	\textit{Abnormal, Pronormal, Contranormal and Carter Subgroups in Some Generalized Minimax Groups}, 
	Commun. Algebra 33 (2005), 4595-4616.
	
	\bibitem{liu_inv}
	W. J. Liu, 
	\textit{Fuzzy invariant subgroups and fuzzy ideals,} Fuzzy Sets and Systems, 8 (1982), 133-139.
	
	\bibitem{liu_op}
	W. J. Liu,
	\textit{Operation on fuzzy ideals,}
	Fuzzy Sets and Systems, 11 (1983), 31-41.
	
	\bibitem{malik_pri}
	D. S. Malik, J. N. Mordeson,
	\textit{R-primary representations of $L$-ideals,} Information Science, 88	(1996), 227-246.
	
	\bibitem{martinez_fuzzy}
	L. Martinez, 
	\textit{$L$-fuzzy subgroups of fuzzy groups and fuzzy ideals of fuzzy rings,} 
	J. Fuzzy Math., 3 (1995), 833-849.
	
	\bibitem{mordeson_comm}
	J.N. Mordeson, D.S. Malik,
	\textit{Fuzzy Commutative Algebra},
	World Scientific, 1998.
	
	\bibitem{mukherjee_some}
	N.P. Mukherjee, P. Bhattacharya, 
	\textit{Fuzzy groups: Some group theoretic analogs}, 
	Inform. Sci., 39 (1986) 247-268.
	
	\bibitem{rosenfeld_fuzzy}
	A. Rosenfeld,
	\textit{Fuzzy groups,}
	J. Math. Anal. Appl., 35 (1971), 512-517.
	
	\bibitem{wu_normal}
	W. M. Wu, 
	\textit{Normal fuzzy subgroups}, 
	Fuzzy Mathematics, 1 (1981), 21-30. 
	
	\bibitem{zadeh_fuzzy}
	L. Zadeh,
	\textit{Fuzzy sets,} 
	Information and Control, 8 (1965), 338-353.
\end{thebibliography}
\end{document}